\documentclass[12pt]{amsart}

\usepackage{amsmath} 
\usepackage{amsfonts}
\usepackage{amssymb}
\usepackage{amsthm}
\usepackage{mathtools}
\usepackage{enumerate}
\usepackage{setspace}
\usepackage{tikz}
\usepackage{graphicx}
\usepackage{tikz}
\usetikzlibrary{matrix}
\usepackage{accents}

\usepackage{url}

\usepackage{float}

\textwidth 6.7in
\oddsidemargin -.03in
\evensidemargin -.03in
\textheight 7.7in
\topmargin .5in

\newtheorem{definition}{Definition}[section]
\newtheorem{lemma}{Lemma}[section]
\newtheorem{thm}{Theorem}[section]

\newtheorem{coro}{Corollary}[section]
\newtheorem{remark}{Remark}[section]
\newtheorem{thm*}{Theorem}[section]
\numberwithin{equation}{section}

\newcounter{example}[section]

\newcommand{\pr}{\partial}
\newcommand{\p}{\partial}

\newcommand{\lm}[2]{\lim\limits_{#1\to #2}}
\newcommand{\ovr}[1]{\overline{#1}}   

\newcommand{\be}{\begin{equation}}
\newcommand{\ee}{\end{equation}}
\newcommand{\bee}{\begin{equation*}}
\newcommand{\eee}{\end{equation*}}

\def\tr{\textnormal{tr}}
\def\dv{\textnormal{div}}

\def\dint{\displaystyle\int}
\def\R{\mathbb{R}}
\def\vol{\mathrm{vol}\,}

\def\S{\Sigma}
\def\({\left(}
\def\){\right)}

\def\To{\longrightarrow}

\def\s{\sigma}

\def\w{\omega}

\def\V{\mathcal{V}}

\def\graph{\textnormal{graph}}

\def\bM{\mathbf{M}}
\def\ADM{\textnormal{ADM}}
\def\m{\mathbf{m}}

\def\defeq{\coloneqq}
\def\A{\mathcal{A}}

\def\BY{\textnormal{BY}}
\def\FH{\accentset{\circ}{H}}

\allowdisplaybreaks

\title[Stability of a quasi-local PMT for graphs]{Stability of a quasi-local positive mass theorem for graphical hypersurfaces of Euclidean space}
\author[Alaee]{Aghil Alaee}
\address{Center of Mathematical Sciences and Applications\\
	Harvard University\\
	Cambridge, MA 02138, USA}
\email{aghil.alaee@cmsa.fas.harvard.edu}

\author[{Cabrera Pacheco}]{Armando J. {Cabrera Pacheco}}
\address{Department of Mathematics, Universit\"at T\"ubingen,  72076 T\"{u}bingen, Germany.}
\email{cabrera@math.uni-tuebingen.de}

\author[McCormick]{Stephen McCormick}
\address{Matematiska institutionen, Uppsala universitet, 751 06 Uppsala, Sweden.}
\email{stephen.mccormick@math.uu.se}

\subjclass[2010]{53C20, 83C99}

\begin{document}

\begin{abstract}
We present a quasi-local version of the stability of the positive mass theorem. We work with the Brown--York quasi-local mass as it possesses positivity and rigidity properties, and therefore the stability of this rigidity statement can be studied. Specifically, we ask if the Brown--York mass of the boundary of some compact manifold is close to zero, must the manifold be close to a Euclidean domain in some sense?

Here we consider a class of compact $n$-manifolds with boundary that can be realized as graphs in $\R^{n+1}$, and establish the following. If the Brown--York mass of the boundary of such a compact manifold is small, then the manifold is close to a Euclidean hyperplane with respect to the Federer--Fleming flat distance.
\end{abstract}

\maketitle
\thispagestyle{empty}

\vspace{-5mm}

\section{Introduction}
The positive mass theorem is one of the most celebrated results in mathematical general relativity, proven independently by Schoen and Yau \cite{Schoen-Yau79,SY-17}, and by Witten \cite{Witten81}. In the context of time-symmetric initial data, it states that a complete asymptotically flat $n$-manifold with non-negative scalar curvature has non-negative ADM mass, where the ADM mass is a geometric quantity associated to the asymptotics of the manifold. From the perspective of physics, such an asymptotically flat manifold represents an isolated gravitating system and the ADM mass is the total mass (energy) contained in the system. The positive mass theorem also has an associated rigidity property; namely, the mass is in fact strictly positive unless the manifold is isometric to $\R^n$. A natural question that then arises is the associated stability question: \emph{If the ADM mass is small, is the manifold close to Euclidean space in some sense?}

That is, given a sequence of asymptotically flat manifolds with non-negative scalar curvature whose ADM masses converge to zero, does this sequence converge in some topology to Euclidean space? For many natural topologies, the answer is no. One can see this by considering so-called ``gravity wells" -- geometrically, these are long thin fingers which can be taken to be getting longer and more thin along the sequence of manifolds (See Figure \ref{fig-gravwells} below). Such a sequence of manifolds may not converge to Euclidean space under many usual notions of convergence of manifolds (see Example 2.9 of \cite{LeeSormani}), and indeed one can construct such examples explicitly where the ADM mass can be made to converge to zero.

Despite this, there has been some success in proving stability under additional hypotheses or using notions of convergence that don't ``see" these gravity wells in the limit. A good notion of convergence for dealing with these gravity wells is intrinsic flat convergence \cite{SW11}, and within a class of spherically symmetric manifolds Lee and Sormani \cite{LeeSormani} were able to prove that the positive mass theorem is stable with respect to the intrinsic flat topology. Huang and Lee \cite{HuangLee2015} were then able to prove stability of the positive mass theorem for manifolds that can be realized as graphs in Euclidean space one dimension higher, with respect to the flat distance in the ambient Euclidean space. The corresponding problem with respect to the intrinsic flat distance was studied by Huang, Lee and Sormani \cite{HLS}. For a class of axially symmetric manifolds, Bryden \cite{Bryden} proved stability of the positive mass theorem with respect to a Sobolev topology, after excising a neighborhood of the central axis. The symmetry combined with the excision of the central axis is what excludes the possibility of gravity wells forming in the domain considered, and is what allows for convergence in a stronger topology. Similarly Lee proved stability, with respect to uniform convergence, of an exterior region for harmonically flat manifolds \cite{Lee}. Another class of manifolds where stability holds with respect to a Sobolev topology are those considered by Allen \cite{Allen1,Allen2}. He considers manifolds that are foliated by a smooth inverse mean curvature flow, with two boundary components that are leaves of the flow. Stability of the positive mass theorem has also been studied for asymptotically hyperbolic manifolds, by Allen \cite{Allen3}; Dahl, Gicquad and Sakovich \cite{D-G-S-stability}; Sakovich and Sormani \cite{S-S}; and by the second-named author \cite{CabreraPacheco19}. Further related stability results can be found in \cite{AB19,B-F,BKS19,Corvino05,Finster,FK}.

In this article we study the same kind of stability problem, however we are interested in the stability of a positive mass theorem for quasi-local mass. There is a wealth of literature on different definitions of quasi-local mass, and the interested reader is directed to \cite{QLMLRR} for a survey of the different definitions in the literature. One definition of quasi-local mass that does have a good positive mass theorem with rigidity is the Brown--York mass, whose positivity was proven by Shi and Tam \cite{ShiTam02}. Note that the problem of ``gravity wells" mentioned above exists here too. One can construct a sequence of compact manifolds with boundary, with Brown--York mass of the boundary going to zero, that do not converge to a smooth manifold under many common topologies.

For a class of compact manifolds with boundary that can be realized as graphs over $\R^n$ in $\R^{n+1}$, we prove that the Brown--York positive mass theorem is stable with respect to the Federer--Fleming flat distance \cite{FF60}. The main theorem refers to a class of compact manifolds with boundary that can be realized as graphs in $\R^{n+1}$ over a connected set $\ovr{U} \subset \R^n$, which we denote by $\mathcal{A}_U^{D}$. For the precise definition of $\mathcal{A}_U^{D}$, the reader is referred to Definition \ref{def-family}. Other technical definitions are also reserved for Section \ref{Sdefs} in order to first state the main result of this article in a simplified way (Theorem \ref{thm-main}). The reader is referred to Theorem \ref{thm-main2} for the precise statement of the result.

\begin{figure}[H]
	
	\begin{tikzpicture}
	\node[anchor=south west,inner sep=0] at (0,0) {\includegraphics[scale=.8]{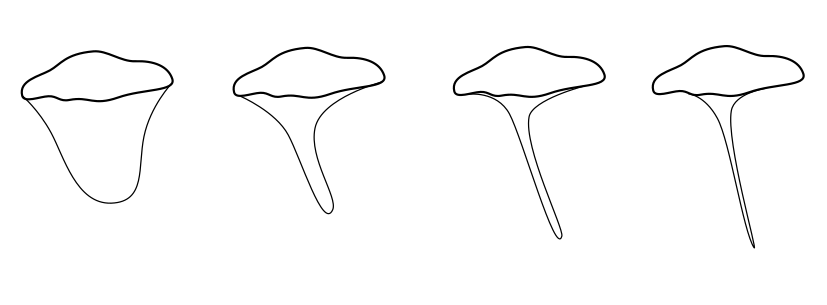}};
	\node at (1.8,4.6) {$\S_1$};
	\node at (2.4,1.5) {$\Omega_1$};
	\node at (6,4.6) {$\S_2$};
	\node at (6.3,1.5) {$\Omega_2$};
	\node at (7,4.6) {$\hdots$};
	\node at (9.7,4.6) {$\S_i$};
	\node at (10,1.5) {$\Omega_i$};
	\node at (13.5,4.6) {$\S_{i+1}$};
	\node at (13.6,1.5) {$\Omega_{i+1}$};
	\end{tikzpicture}
	\caption{A sequence of manifolds with increasingly longer and thinner ``gravity wells".}
	\label{fig-gravwells}
\end{figure}

\begin{thm} \label{thm-main}
	Let $(\Omega_i,g_i)$ be a sequence of a manifolds in $\mathcal{A}_U^{D}$ and denote by $\S_i$ the boundary component of $\Omega_i$ that is not a minimal surface. After an appropriate normalization, if the limit of the Brown--York mass $\m_{\BY}(\S_i)$ goes to zero, then $(\Omega_i,g_i)$ converges to $\{0\} \times \ovr{U}$ with respect to the Federer--Fleming flat distance. Moreover, $\vol(\Omega_i) \to \vol(U)$ as $\m_{\BY}(\S_i) \to 0$.
\end{thm}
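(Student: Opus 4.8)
The plan is to reduce the theorem to an integral identity---a quasi-local analogue of Lam's formula for the mass of a Euclidean graph---expressing $\m_{\BY}(\S_i)$ as a nonnegative weighted bulk integral of the scalar curvature of $\Omega_i$ plus a nonnegative boundary term coming from the minimal component, and then to use it to force convergence of the graph functions, which I would upgrade to Federer--Fleming flat convergence by exhibiting an explicit filling current. Write $\Omega_i=\graph(f_i)$ over $\ovr U\setminus V_i$, where $\partial U$ is the projection of $\S_i$ and $\partial V_i$---allowed to depend on $i$---is the projection of the minimal component, along which $|\grad f_i|\to\infty$. The first main task is to prove, for a dimensional constant $c_n>0$,
\bee
\m_{\BY}(\S_i)=c_n\int_{\Omega_i}\frac{R_{g_i}}{\sqrt{1+|\grad f_i|^2}}\,d\mu_{g_i}+\text{(nonnegative term built from $\partial V_i\subset\R^n$)}.
\eee
This rests on the classical fact that for a graph, $R_g$ times an explicit positive factor equals the divergence on $\R^n$ of a vector field $V[\grad f,\grad^2 f]$; applying the divergence theorem on $\ovr U\setminus V_i$ produces boundary integrals over $\partial U$ and over $\partial V_i$. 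The term over $\partial U$ is matched with $\m_{\BY}(\S_i)=\frac{1}{2(n-1)\omega_{n-1}}\int_{\S_i}(H_0-H)\,d\mathcal H^{n-1}$ through the Shi--Tam mean-curvature comparison: after the vertical normalization in the statement, $\S_i$ sits at height zero and is isometric to $\partial U\subset\R^n$, so the reference mean curvature $H_0$ is independent of $i$, while the mean curvature $H$ of $\S_i$ in $\Omega_i$ is computed from $f_i$ and is checked, after integrating by parts along $\partial U$, to agree with $V[\grad f_i,\grad^2 f_i]\cdot\nu$. The term over $\partial V_i$---essentially $\int_{\partial V_i}H^{\R^n}_{\partial V_i}\,d\mathcal H^{n-1}$, with $H^{\R^n}_{\partial V_i}$ the mean curvature of $\partial V_i$ in $\R^n$---is nonnegative because the corresponding component of $\Omega_i$ is minimal, and it controls the size of the hole: its vanishing forces $\vol(V_i)\to0$ (using the uniform geometric bounds of $\mathcal{A}_U^{D}$). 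Positivity and rigidity of $\m_{\BY}$ on $\mathcal{A}_U^{D}$ are immediate corollaries.

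Granting this identity, $\m_{\BY}(\S_i)\to0$ forces both $\int_{\Omega_i}\frac{R_{g_i}}{\sqrt{1+|\grad f_i|^2}}\,d\mu_{g_i}\to0$ and $\vol(V_i)\to0$. Using the a priori bounds encoded by the depth parameter $D$ in $\mathcal{A}_U^{D}$ (control of $f_i$, and of $\grad f_i$ away from $\partial V_i$), one extracts a subsequence with $f_i\to f_\infty$ in a suitable topology on $\ovr U$, the holes disappearing in the limit. The crux of the argument---and the step I expect to be the main obstacle---is to conclude that $f_\infty$ is constant: the vanishing of the weighted total scalar curvature only controls $\sigma_2$ of the second fundamental form of the graph, not its full norm, so it does not by itself give $\grad^2 f_\infty=0$. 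I would resolve this by noting that the limiting graph has $R_{g_\infty}\ge0$ with $\int R_{g_\infty}=0$ (hence it is scalar flat) and has no horizon, so $\m_{\BY}(\S_\infty)=0$ by the identity above, and then invoking the rigidity statement of the graphical positive mass theorem---equivalently, the rigidity of the Brown--York positive mass statement just obtained---to conclude $f_\infty\equiv0$ after the normalization. Alternatively, and bypassing the compactness step, one can extract from the identity together with a Sobolev inequality a quantitative estimate $\|f_i-c_i\|_{L^q(U)}+\|\grad f_i\|_{L^2(U)}\le\Phi\big(\m_{\BY}(\S_i)\big)$ with $\Phi(0^+)=0$.

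It remains to convert $f_i\to0$ into flat convergence and to extract the volume limit. For the former, let $B_i$ be the $(n+1)$-current given by the solid region between $\graph(f_i)$ and $\{0\}\times(\ovr U\setminus V_i)$, so that
\bee
\Omega_i-\{0\}\times\ovr U=\partial B_i+\text{(error currents supported over $V_i$ and over the vertical wall above $\partial V_i$)}.
\eee
Then the flat distance is bounded by $\mathbf{M}(B_i)+\text{(masses of the error terms)}=\int_{\ovr U\setminus V_i}|f_i|\,dx+O\big(\vol(V_i)+D\cdot\mathrm{Per}(V_i)\big)$, and each summand tends to $0$ by the previous step---the vertical wall over $\partial U$ contributes nothing, precisely because $f_i|_{\partial U}=0$ after the normalization. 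Finally,
\bee
\vol(\Omega_i)=\int_{\ovr U\setminus V_i}\sqrt{1+|\grad f_i|^2}\,dx=\vol(\ovr U\setminus V_i)+\int_{\ovr U\setminus V_i}\Big(\sqrt{1+|\grad f_i|^2}-1\Big)\,dx\longrightarrow\vol(U),
\eee
using $\vol(V_i)\to0$ together with the control on $\grad f_i$ obtained above.
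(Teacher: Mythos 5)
Your opening identity (the Lam-type formula expressing $\m_{\BY}$ as a weighted bulk integral of scalar curvature plus a horizon term) and your closing current decomposition (a filling $B_i$ between $\graph[f_i]$ and the plane, plus error currents over the hole) both match the paper, where they appear as Lemma \ref{lemma-mBYbound}, Theorem \ref{thm-QLpenrosegraphs}, and the $A$, $B^+$, $B^-$ decomposition in Lemma \ref{lemma-flatd}. The genuine gap is in the middle, and it is exactly the step you flag as the main obstacle: you need $\int_U|f_i|\,dx\to 0$ to control $\mathbf{M}(B_i)$, and neither of your two proposed routes delivers it. The compactness route fails because $\mathcal{A}_U^{D}$ bounds only the height and depth of the graph, not $\grad f_i$: the class is designed precisely to contain arbitrarily thin, steep gravity wells, so there is no subsequence converging in any topology strong enough to then invoke a rigidity theorem, and a weak limit of such graphs need not be a smooth admissible graph to which the rigidity of the Brown--York positive mass theorem applies. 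The quantitative alternative also does not follow from the identity: $\int R(f)/\sqrt{1+|Df|^2}$ has a pure divergence ($\sigma_2$-type) structure and is not coercive in $\grad f$, so it cannot bound $\|\grad f_i\|_{L^2}$; a thin deep well can have small mass and large Dirichlet energy. The same problem recurs in your volume computation, where you need $\int\bigl(\sqrt{1+|Df_i|^2}-1\bigr)\,dx\to 0$ without pointwise control of $\grad f_i$.

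The mechanism the paper uses to close this gap, and which is absent from your proposal, is control of the \emph{areas of the level sets} rather than of $f_i$ itself. The monotonicity in Theorem \ref{thm-QLpenrosegraphs} gives $\int_{\S_h}\frac{|Df|^2}{1+|Df|^2}\FH_{\S_h}\,dV_h\le c_n\m_{\BY}$ for a.e.\ $h$; combined with the Minkowski inequality for outer-minimizing hypersurfaces this yields a differential inequality $\V'(h)\ge F(\V(h))$ for $\V(h)=|\S_h|$ (Lemma \ref{lemma-Vprime-estimate}), and the ODE comparison of Lemma \ref{lemma-ODEcomp} then shows that above the critical height $h_o$ of Definition \ref{def-h0} (where $\V(h_o)\sim\m_{\BY}^{\frac{n-1}{n-2}}$) the graph rises only by $O(|\pr U|^{1/4}\m_{\BY}^{1/2})$ when $n=3$ (Lemma \ref{lemma-supf-est}), which controls $\mathbf{M}(B^+)$. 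Below $h_o$ every level set has small area, so the isoperimetric inequality bounds each horizontal slice of $B^-$, and the depth bound $D$ gives $\mathbf{M}(B^-)=O(D\,\m_{\BY}^{\frac{n}{n-2}})$; the hole is handled, as you correctly guessed, by the quasi-local Penrose inequality. This is also where the normalization enters: the graphs are translated so that $h_o=0$, not so that $f_i|_{\pr U}=0$ (the two agree in the limit since $\max f_i-h_o\to 0$, but the estimates are organized around $h_o$). Everything is quantitative for each fixed $i$, and no limit function is ever produced.
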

We remark that the definition of $\mathcal{A}_U^{D}$ permits the boundary of each $\Omega_i$ to have at most one connected component that is not a minimal surface. The normalization mentioned above in Theorem \ref{thm-main} consists of a vertical translation of the isometric embedding of $(\Omega_i,g_i)$ as a graph in $\R^{n+1}$.

To the best of our knowledge the problem of the stability of domains in Euclidean space as unique minimizers of quasi-local mass has yet to be directly addressed. The only result in this direction that we are aware of is a result of Allen \cite{Allen2} which we paraphrase as follows. Let $(\Omega,g_i)$ be a sequence of compact manifolds with connected boundary $\S_i$ and non-negative scalar curvature that are foliated by smooth inverse mean curvature flow with $\S_i$ a leaf of this foliation. Assume further that the Hawking mass of each $\S_i$ is non-negative and converging to $0$, then $g_i$ converges to the flat metric with respect to an appropriate Sobolev norm. The reader is directed to Theorem 3.1 of \cite{Allen2} for the precise statement of the result. However, since the Hawking mass can be quite negative in general, one cannot expect a result like this to hold outside of special cases.

We pursue this problem following the approach of Huang and Lee \cite{HuangLee2015} in the case of the stability of the positive mass theorem for asymptotically flat manifolds. We note that this approach has been applied in different settings: to establish the stability of the positive mass theorem for asymptotically hyperbolic graphs by the second-named author \cite{CabreraPacheco19}  and  also of graphical tori with almost non-negative scalar curvature by the Ketterer, Perales, and the second-named author \cite{CKP}. While we have focused on this particular approach, we expect that one could prove a similar quasi-local stability result for the Brown--York mass in any other case where stability of the positive mass theorem holds for asymptotically flat manifolds. We also would like to remark that in \cite{CKP}, the dimension was restricted to $n=3$, however each of the arguments establishing Theorem 1.3 therein are valid for arbitrary dimension, with the exception of Lemma 4.6 there. We remark that Lemma \ref{lemma-supf-est} below includes a higher dimensional estimate analogous to Lemma 4.6 of \cite{CKP}, which would promote Theorem 1.3 of \cite{CKP} to all dimensions $n\geq3$.

The structure of this article is as follows. In Section \ref{Sdefs} we discuss some properties of the Brown--York quasi-local mass, and give precise definitions of the graphical manifolds considered here. We also show that a positive mass stability type result due to Corvino \cite{Corvino05} has a straightforward analogue for the stability of the Brown--York mass positivity. In Section \ref{section-vol-est} we obtain volume estimates that are required for the proof of the main theorem. Then in Section \ref{Sflatconvergence} we provide the proof of Theorem \ref{thm-main}.

\medskip

\noindent {\bf Acknowledgements.} AA acknowledges the support of the Gordon and Betty Moore Foundation and the John Templeton Foundation. AJCP is grateful to the Carl Zeiss Foundation for its generous support and to the Center of Mathematical Sciences and Applications at Harvard University for its hospitality during his research visit in August 2019. SM would like to thank Institut Mittag-Leffler (Djursholm, Sweden) for hospitality where part of this research was conducted in the autumn of 2019. The authors would also like to thank Lan Hsuan-Huang and Christina Sormani for helpful discussions during the preparation of this article.

\section{The Brown--York mass and graphical manifolds} \label{Sdefs}
 The Brown--York mass is a quasi-local notion of energy in general relativity that is usually defined for surfaces in $3$-manifolds. Given a closed surface $\S$ with positive Gauss curvature in a Riemannian manifold $M$, the Brown--York mass is defined as
\be 
\m_{\BY}(\S):=\frac{1}{8\pi}\int_\S \left(H_o-H\right)\,dS,
\ee 
where $H$ is the mean curvature of $\S$ in $M$ and $H_o$ is the mean curvature of the unique isometric embedding of $\S$ in $\R^3$ guaranteed by classical results from Nirenberg \cite{Nirenberg} and Pogorelov \cite{Pogorelov}. Here and throughout, we use the convention that the mean curvature with respect to the outward unit normal $\nu$ is taken to be $H:=\tr(\Pi)=\dv(\nu)$, equal to $2$ for the unit sphere in $\R^3$, where $\Pi_{ij}:=\nabla_i\nu_j$ denotes the second fundamental form. With respect to the same unit normal, we define the mean curvature vector to be $\vec{H}=-H\nu$.

If $\S$ is strictly mean convex and the ambient manifold has non-negative scalar curvature, then it was shown by Shi and Tam \cite{ShiTam02} that the Brown--York mass is non-negative and only zero if $\S$ bounds a domain isometric to a domain in Euclidean space. In fact they showed that, regardless of dimension, assuming an appropriate isometric embedding exists then the Brown--York mass is positive under a spin assumption. Furthermore, as the positive mass theorem has now been established by Schoen and Yau in all dimensions \cite{SY-17} without the spin assumption, Miao's version of the positive mass theorem with corners \cite{Miao} extends to all dimensions and therefore the result of Shi and Tam can be stated as follows.

\begin{thm}[\cite{ShiTam02}, Theorem 4.1] \label{thm-ShiTam}
	Let $(\Omega,g)$ be a compact $n$-manifold with non-negative scalar curvature and connected boundary with positive (outward-pointing) mean curvature $H$. Assume further that $\p \Omega$ is isometric to strictly convex surface in $\R^{n}$ with mean curvature $H_o$, then
	\be \label{eq-BYpos}
	\int_{\p\Omega} H_o \, dS \geq \int_{\p\Omega } H\, dS.
	\ee 
	Furthermore, in the case of equality in \eqref{eq-BYpos} then $(\Omega,g)$ is isometric to a domain in $\R^n$.
\end{thm}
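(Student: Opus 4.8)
The plan is to attach to $(\Omega,g)$ an asymptotically flat, scalar-flat collar along $\S=\p\Omega$, so that the resulting manifold carries a metric with nonnegative scalar curvature in a distributional sense, and then to read off \eqref{eq-BYpos} together with the rigidity statement from the positive mass theorem with corners (\cite{Miao}, valid in all dimensions once combined with \cite{SY-17}). This is essentially the argument of Shi and Tam.

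First I would use the hypothesis that $\S$ is isometric to a strictly convex hypersurface of $\R^n$ to identify $\S$ with a convex hypersurface bounding a convex body, and foliate the exterior region by the outward parallel hypersurfaces $\{\S_r\}_{r\geq 0}$, which stay smooth and strictly convex. In these coordinates the Euclidean metric is $\delta = dr^2 + g_r$ on $[0,\infty)\times\S$, and one looks for a metric $\bar g = u^2\,dr^2 + g_r$ with $R(\bar g)=0$. The scalar-flat condition reduces to a quasilinear parabolic equation for $u=u(r,\cdot)$ along the foliation, schematically $H_r\,\p_r u = u^2\Lap_{g_r}u + (\text{lower order terms})$, where $H_r>0$ denotes the mean curvature of $\S_r$ in $\R^n$; one prescribes the initial datum $u|_{r=0} = H_o/H$, where $H_o = H_r|_{r=0}$ is the mean curvature of $\S$ in $\R^n$ and $H$ is its mean curvature in $\Omega$. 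The first block of work is the analysis of this equation: long-time existence, positivity of $u$, and the asymptotics $u\to1$ at infinity with a Schwarzschild-type expansion, so that $\widehat\Omega := \Omega\cup_\S([0,\infty)\times\S,\bar g)$ is asymptotically flat. By construction the two metrics induce the same metric on $\S$, and the choice $u|_{r=0}=H_o/H$ makes the mean curvature of $\S$ from the collar side, $u^{-1}H_o|_{r=0}=H$, equal to its mean curvature from the $\Omega$ side; thus $\widehat\Omega$ carries a Lipschitz metric across $\S$ with no mean-curvature jump, and is smooth with nonnegative scalar curvature away from $\S$.

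Next I would study the one-parameter family
\[
m(r) := \a_n\int_{\S_r} H_r\left(1-u^{-1}\right)\,dS_r,\qquad \a_n>0 \text{ a dimensional constant.}
\]
Since $u^{-1}|_{r=0}=H/H_o$, we get $m(0)=\a_n\int_\S(H_o-H)\,dS$, a positive multiple of the quantity appearing in \eqref{eq-BYpos}. Two facts remain to be proved: (i) $r\mapsto m(r)$ is non-increasing, obtained by differentiating under the integral, substituting the parabolic equation for $\p_r u$, integrating by parts on $\S_r$, and invoking a Minkowski/Willmore-type integral inequality for the convex hypersurfaces $\S_r$ (this is where strict convexity and the dimension enter, and is the step I expect to be the main obstacle); and (ii) $\lim_{r\to\infty}m(r)$ equals a positive multiple of the ADM mass of $\widehat\Omega$, which follows from the asymptotic expansion of $u$ and $g_r$. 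Granting these, the positive mass theorem with corners applied to $\widehat\Omega$ gives $\lim_{r\to\infty}m(r)\geq0$, and monotonicity yields $m(0)\geq\lim_{r\to\infty}m(r)\geq0$, which is \eqref{eq-BYpos}.

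For the rigidity statement, equality in \eqref{eq-BYpos} forces $m(0)=0$, hence $m\equiv0$ by monotonicity; in particular the ADM mass of $\widehat\Omega$ vanishes, so the rigidity case of the positive mass theorem with corners forces $\widehat\Omega$ to be isometric to $\R^n$ (in particular the metric is smooth across $\S$), and therefore $(\Omega,g)$ is isometric to a domain in $\R^n$. Besides the monotonicity inequality, the other delicate point is the global parabolic analysis for $u$ with the sharp asymptotics needed to identify the limit of $m(r)$ with the ADM mass.
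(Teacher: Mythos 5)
This theorem is quoted verbatim from Shi--Tam (with the all-dimensional upgrade coming from Miao's corner positive mass theorem combined with Schoen--Yau), so the paper supplies no proof of its own, and your outline is a faithful reconstruction of exactly the cited argument: the scalar-flat quasi-spherical collar over the convex foliation $\{\S_r\}$, the initial datum $u|_{r=0}=H_o/H$ matching the mean curvatures at the corner, the monotone quantity $m(r)$ converging to the ADM mass, and the corner positive mass theorem with its rigidity case. The one detail I would adjust is that the monotonicity of $m(r)$ in Shi--Tam's computation is driven by the positivity of the intrinsic scalar curvature of the convex leaves $\S_r$ (entering through the Gauss equation after integrating by parts), rather than by a Minkowski/Willmore-type inequality, but this does not alter the structure of the proof.
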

We remark that the hypothesis of strict convexity in Theorem \ref{thm-ShiTam} was relaxed by Eichmair, Miao and Wang \cite{EMW} to the condition that $\p\Omega$ have positive scalar curvature and that each boundary component be isometric to a mean convex star-shaped surface in $\R^n$.

In addition, the Brown--York mass also satisfies a quasi-local Penrose inequality in all dimensions that the standard Riemannian Penrose inequality holds. This was shown for $n=3$ by Shi and Tam \cite{ShiTam07}, and follows straightforwardly in higher dimensions by combining the Riemannian Penrose inequality with corners \cite{Miao09} (see also \cite{MM19}) with Shi and Tam's proof of the positivity of the Brown--York mass \cite{ShiTam02}. Furthermore the inequality is in fact strict provided that the horizon is non-empty, as noted by Lu and Miao \cite{LM}.
\begin{thm}\label{thm-QLpenrose}
	Let $(\Omega,g)$ be a compact Riemannian $n$-manifold ($3\leq n \leq 7$) with smooth boundary $\p\Omega$ and non-negative scalar curvature. Suppose $\p\Omega=\S_H\cup\S$ where $\S_H$ is a closed (possibly disconnected) minimal surface, and $\S$ has positive (outward) mean curvature and is isometric to a strictly convex closed embedded sphere in $\R^n$. Denote by $H$ the mean curvature of $\S$ in $\Omega$ and by $H_o$ the mean curvature of its isometric embedding into $\R^n$, then we have
	\be \label{eq-BYpenrose}
	\frac{1}{(n-1)\omega_{n-1}}\int_{\S}\left(H_o-H\right)\,dS>\frac12\( \frac{|\S_H|}{\omega_{n-1}} \)^{\frac{n-2}{n-1}}.
	\ee 
\end{thm}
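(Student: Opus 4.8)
The plan is to reduce this to the Riemannian Penrose inequality with corners by attaching to $(\Omega,g)$ the Shi--Tam asymptotically flat collar lying exterior to the isometric embedding of $\Sigma$ into $\R^n$: Shi--Tam's monotonicity will show the left-hand side dominates the ADM mass of the glued manifold, the Penrose inequality will bound that mass below by the horizon term, and the strict inequality will come from the equality case of the monotonicity.

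To build the collar, let $\widehat\Sigma\subset\R^n$ be the strictly convex embedded hypersurface isometric to $\Sigma$ (which exists by hypothesis) and $\widehat\Omega$ the convex body it bounds. Foliating $\R^n\setminus\widehat\Omega$ by the outward parallel hypersurfaces $\widehat\Sigma_t=\{x:\mathrm{dist}(x,\widehat\Omega)=t\}$, the Euclidean metric reads $dt^2+\gamma_t$ with $\gamma_t$ the metric induced on the (smooth, strictly convex) leaf $\widehat\Sigma_t$. Following Shi--Tam \cite{ShiTam02}, I would solve the quasilinear parabolic equation for a positive function $u$ on $[0,\infty)\times\widehat\Sigma$ rendering $g_{ST}:=u^2\,dt^2+\gamma_t$ scalar-flat, with initial data $u|_{t=0}=H_o/H>0$; this datum is precisely the one making the mean curvature of $\widehat\Sigma_0$ in $g_{ST}$ equal to the mean curvature $H$ of $\Sigma$ in $(\Omega,g)$. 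Then $g_{ST}$ is asymptotically flat and Shi--Tam's monotonicity shows that
\[
m(t):=\frac{1}{(n-1)\omega_{n-1}}\int_{\widehat\Sigma_t}H_t\left(1-u^{-1}\right)\,dS_t
\]
is non-increasing, with $m(0)=\frac{1}{(n-1)\omega_{n-1}}\int_\Sigma(H_o-H)\,dS$ and $\lim_{t\to\infty}m(t)=m_{\ADM}(g_{ST})$; in higher dimensions the sign of $m'$ uses that the convex leaves $\widehat\Sigma_t$ have nonnegative scalar curvature by the Gauss equation. In particular $\frac{1}{(n-1)\omega_{n-1}}\int_\Sigma(H_o-H)\,dS\geq m_{\ADM}(g_{ST})$.

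Next I would glue $(\Omega,g)$ to $(\R^n\setminus\widehat\Omega,g_{ST})$ along $\Sigma\cong\widehat\Sigma$, obtaining a complete asymptotically flat manifold $(M,\bar g)$ that is smooth with nonnegative scalar curvature off $\Sigma$, is Lipschitz across $\Sigma$ with matching induced metrics and mean curvatures, carries the closed minimal surface $\Sigma_H$ as its only boundary, and satisfies $m_{\ADM}(M,\bar g)=m_{\ADM}(g_{ST})$. As the mean curvatures agree across $\Sigma$, the corner contributes no negative distributional scalar curvature, so the Riemannian Penrose inequality with corners \cite{Miao09,MM19}, valid for $3\le n\le7$, gives $m_{\ADM}(M,\bar g)\geq\frac12(|\Sigma_H|/\omega_{n-1})^{(n-2)/(n-1)}$; chaining the two displays yields the non-strict inequality. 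The step needing the most care is checking that the corners version of the Penrose inequality really applies in this generality --- a merely Lipschitz ambient metric, a possibly disconnected minimal boundary, all $3\le n\le7$ --- which is handled via the smoothing arguments of \cite{Miao,Miao09} and, if $\Sigma_H$ is not already outer area minimizing, by passing to the outermost minimal surface enclosing it.

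Finally, for strictness, suppose equality held. Then $m(0)=m_{\ADM}(g_{ST})=\lim_{t\to\infty}m(t)$, so the non-increasing function $m$ is constant on $[0,\infty)$; by the equality case of Shi--Tam's monotonicity this forces $u\equiv1$, i.e.\ $g_{ST}$ is the flat exterior metric and $m_{\ADM}(g_{ST})=0$. But then $\frac12(|\Sigma_H|/\omega_{n-1})^{(n-2)/(n-1)}=m_{\ADM}(M,\bar g)=m_{\ADM}(g_{ST})=0$, forcing $\Sigma_H=\emptyset$ and contradicting the hypothesis. Hence the inequality is strict, recovering the observation of Lu and Miao \cite{LM}.
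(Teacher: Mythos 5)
Your proposal is correct and follows essentially the same route the paper takes, since the paper proves this theorem precisely by citing the combination you spell out: the Shi--Tam scalar-flat collar and its monotone quantity to bound the Brown--York integral below by the ADM mass of the glued manifold, the Riemannian Penrose inequality with corners \cite{Miao09,MM19} for the horizon term, and the Lu--Miao observation \cite{LM} for strictness. The one point to watch is that the corner Penrose inequality in \cite{Miao09,MM19} is stated for a horizon that is outer-minimizing (or the only closed minimal surface in $\Omega$), and your fallback of passing to the outermost minimal surface enclosing $\S_H$ does not obviously preserve the lower bound by $|\S_H|$; this hypothesis is, however, equally implicit in the paper's own statement, so your argument matches the paper's in both substance and level of rigor.
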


Note that for $n=3$ the condition that $\S$ have positive Gauss curvature is sufficient to ensure the existence of a unique isometric embedding into $\R^3$, which indeed is strictly convex.

We now turn to our first stability-type result, which follows easily from the observation that Theorem 2.3 of \cite{Corvino05} can be localized to any quasi-local mass for which an appropriate quasi-local Penrose inequality holds. We show that if the Brown--York mass of the boundary of a compact manifold is small relative to the sectional curvatures of the manifold, then the manifold must be diffeomorphic to a ball. For a special case of the spacetime version of this theorem the reader is referred to \cite{Aghil}.

\begin{thm}
	Let $(\Omega,g)$ be a compact Riemannian manifold with smooth boundary and non-negative scalar curvature. Suppose that the boundary has positive Gaussian curvature, positive mean curvature, and that the sectional curvatures are bounded above by some constant $C^2$, $C>0$. Then if the Brown--York mass of the boundary satisfies $\m_{\BY}(\pr \Omega)< \frac{1}{2C}$ then $\Omega$ contains no closed minimal surfaces and is diffeomorphic to a ball in $\mathbb{R}^3$.
\end{thm}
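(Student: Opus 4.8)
The plan is to localize Corvino's argument for the asymptotically flat case \cite{Corvino05}, replacing the Riemannian Penrose inequality used there by the quasi-local Penrose inequality of Theorem~\ref{thm-QLpenrose}. Since $\m_{\BY}$ is the three-dimensional Brown--York mass we have $\dim\Omega=3$, and we may assume $\pr\Omega$ connected, as the conclusion requires. There are two things to establish: that $\Omega$ contains no closed minimal surface, and that $\Omega$ is then diffeomorphic to a ball.

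For the first, suppose $\Omega$ does contain a closed minimal surface. Since $\pr\Omega$ is strictly mean convex it serves as a barrier, so there is a smooth compact embedded outermost minimal surface $\S_H\subset\Omega$; let $\Omega'$ be the region it bounds together with $\pr\Omega$. Then $(\Omega',g)$ has non-negative scalar curvature, $\pr\Omega'=\S_H\cup\pr\Omega$ with $\S_H$ a closed minimal surface, and $\pr\Omega$ still has positive mean curvature and, by its positive Gaussian curvature and \cite{Nirenberg,Pogorelov}, is isometric to a strictly convex embedded sphere in $\R^3$. Applying Theorem~\ref{thm-QLpenrose} to $(\Omega',g)$ --- noting $(n-1)\omega_{n-1}=8\pi$ when $n=3$ --- gives
\[
\m_{\BY}(\pr\Omega)\;>\;\frac{1}{2}\left(\frac{|\S_H|}{4\pi}\right)^{1/2}.
\]
To produce a contradiction I would bound $|\S_H|$ from below using the curvature hypothesis: each component $\S_0$ of the outermost minimal surface $\S_H$ is a topological sphere (a standard consequence of non-negative scalar curvature), and by the Gauss equation $K_{\S_0}=K_g(T\S_0)+\det\Pi$, where $\det\Pi=-\tfrac{1}{2}|\Pi|^2\le0$ by minimality, so that $K_{\S_0}\le K_g(T\S_0)\le C^2$. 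Gauss--Bonnet then gives $4\pi=\int_{\S_0}K_{\S_0}\,dS\le C^2|\S_0|\le C^2|\S_H|$, i.e.\ $|\S_H|\ge 4\pi/C^2$, whence $\m_{\BY}(\pr\Omega)>\tfrac{1}{2C}$, contradicting the hypothesis. So $\Omega$ contains no closed minimal surface.

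The second part follows by the same reasoning as in \cite{Corvino05}. Concretely, one may glue on the Shi--Tam collar extension \cite{ShiTam02} of $(\pr\Omega,g|_{\pr\Omega},H)$ to obtain a complete asymptotically flat $3$-manifold $(\widehat M,\widehat g)$ with $R\ge0$ (smooth, after Miao's corner smoothing \cite{Miao}), with mean curvature matched across the gluing surface, and with $\m_{\ADM}(\widehat M)\le\m_{\BY}(\pr\Omega)<\tfrac{1}{2C}$; since the Shi--Tam end is foliated by strictly mean convex spheres, a maximum-principle argument forces any closed minimal surface in $\widehat M$ to lie in $\Omega$, so by the first part $\widehat M$ has none. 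Corvino's argument \cite{Corvino05} --- which makes essential use of the smallness of the mass, and not merely of the absence of minimal surfaces --- then shows $\widehat M$ is diffeomorphic to $\R^3$; since $\pr\Omega$ is a smoothly embedded $2$-sphere in $\widehat M\cong\R^3$ separating $\Omega$ from the end, the Schoenflies theorem gives that $\Omega$ is diffeomorphic to a ball in $\R^3$.

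I expect the essential work to be in the second part rather than the first: one must carry out the capping and maximum-principle arguments with enough regularity, and, crucially, invoke the smallness of $\m_{\BY}(\pr\Omega)$ correctly to rule out nontrivial topology --- the absence of closed minimal surfaces alone does not force the topology of a ball. The sectional curvature bound $K_g\le C^2$ is used only in the area estimate for $\S_H$, exactly as in \cite{Corvino05}.
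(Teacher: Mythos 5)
Your first step is, in substance, the paper's entire proof: combine the quasi-local Penrose inequality (Theorem \ref{thm-QLpenrose}) with the Gauss equation ($\det\Pi\le 0$ on a minimal surface, so $K\le C^2$) and Gauss--Bonnet to force any closed minimal sphere in $\Omega$ to have area at least $4\pi/C^2$, whence $\m_{\BY}(\pr\Omega)>\frac{1}{2C}$, a contradiction. The divergence, and the problem, is your second step. You assert that ``the absence of closed minimal surfaces alone does not force the topology of a ball'' and on that basis build a detour through the Shi--Tam extension, Miao's corner smoothing, Corvino's asymptotically flat theorem, and Schoenflies. That premise is exactly backwards relative to the topological input that both Corvino's proof and the paper's rely on, namely Theorem~1$''$ and Proposition~1 of Meeks--Simon--Yau \cite{MeeksSimonYau}: a compact $3$-manifold with mean convex boundary that is \emph{not} diffeomorphic to a ball contains a closed embedded minimal sphere. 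The paper's proof therefore runs the contrapositive of your first part: assume $\Omega$ is not a ball, extract a minimal sphere from \cite{MeeksSimonYau}, and feed it into the area estimate you already derived. No gluing is needed, and both conclusions of the theorem drop out of the single contradiction.

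Beyond being unnecessary, the detour does not go through as written. Corvino's theorem needs the sectional curvature bound on the whole asymptotically flat manifold, and neither the Shi--Tam collar nor the metric produced by Miao's smoothing (which perturbs $g$ in a neighbourhood of the corner, so $\Omega$ no longer sits isometrically inside $\widehat{M}$ and the Schoenflies step loses its footing) comes with the bound $C^2$. Even granting your confinement of minimal surfaces to $\Omega$, the step of Corvino's argument you would then quote --- no minimal spheres plus asymptotic flatness implies $\R^3$ --- is itself nothing but Meeks--Simon--Yau, so you would be assuming in the noncompact setting precisely the statement you declined to use in the compact one. A smaller point: your variant passes through the outermost minimal surface and needs each of its components to be a sphere, which requires separately excluding stable minimal tori; the paper sidesteps this because \cite{MeeksSimonYau} hands it an embedded minimal sphere directly. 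Replace your second part by this contrapositive and your proof closes, essentially coinciding with the paper's.
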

\begin{proof}
	We follow Corvino's argument almost identically \cite{Corvino05}, replacing the ADM mass with the Brown--York mass. Note that the boundary must be a $2$-sphere since it has positive Gaussian curvature.  For the sake of obtaining a contradiction, consider $\m_{\BY}(\pr \Omega) $ and $C$ as in the statement of the theorem and assume $\Omega$ is not diffeomorphic to a ball. By a classical result of of Meeks, Simon and Yau (see Theorem 1$''$ and Proposition 1 of \cite{MeeksSimonYau}), there exists a closed minimal surface $\S\cong\mathbb{S}^2$ in $\Omega$. For a given point $p\in\S$, let $\{e_1,e_2\}$ be a basis of $T_p\S$ in which the second fundamental form $\Pi$ is diagonal and denote the principal curvatures at $p$ by $\kappa_1=\Pi(e_1,e_1)$ and $\kappa_2=\Pi(e_2,e_2)$. The Gauss equation then gives the Gauss curvature $K$ in terms of the Riemann tensors $R$ and $\overline{R}$ of $\S$ and $\Omega$ respectively,
	\be 
	K=R(e_1,e_2,e_1,e_2)=\overline{R}(e_1,e_2,e_1,e_2)+\kappa_1\kappa_2.
	\ee 
	Since $\S$ is minimal, $\kappa_1\kappa_2\leq0$ at $p$ and therefore we have
	\bee 
	K\leq C^2,
	\eee
	and indeed this holds everywhere on $\S$. By the Gauss--Bonnet Theorem we have
	\be 
	4\pi=\int_\S K \leq |\S|C^2<\frac{|\S|}{4\m_{\BY}(\p\Omega)^2},
	\ee 
	and by Theorem \ref{thm-QLpenrose} we then have
	\be 
	\m_{\BY}(\p\Omega)< \(\frac{|\p \Omega|}{16\pi}\)^{1/2}\leq \m_{\BY}(\p\Omega);
	\ee 
	that is, no such closed minimal surface can exist and it is a contradiction. Therefore, $\Omega$ is diffeomorphic to a ball in $\mathbb{R}^3$.
\end{proof}

We now turn towards the case of graphical manifolds, and in particular, give some definitions that will be needed throughout.

\begin{definition}
	Let $U_o \Subset U$ be two bounded open sets in $\R^n$, with $U$ connected and $U_o$ possibly empty. A compact graphical manifold with boundary is the graph of a smooth function $f:\ovr{U} \setminus U_o \To \R$ in $\R \times \ovr{U}\subset \R^{n+1}$. If $U_o \neq \emptyset$, then we assume that $f(\pr U_o)$ is a minimal boundary, that is, $f$ locally constant on $\pr U_o$ and $|Df| \to \infty$ on $\pr U_o$. In the case that $U_o \neq \emptyset$, we say that $\graph[f]$ (or simply $f$) has a minimal boundary, otherwise we say that it is entire.
\end{definition}

In this work we are interested in compact Riemannian $n$-manifolds that can be isometrically embedded into $\R^{n+1}$ as graphical manifolds with boundary. More precisely we define the following family of manifolds.

\begin{definition} \label{def-family}
	We say that $(\Omega,g)$ is an admissible compact graph over $U$, and denote it $(\Omega,g) \in \mathcal{A}_U$, if it is a Riemannian manifold with boundary that can be isometrically embedded in $\R^{n+1}$ as the compact graphical manifold with boundary of some function $f:\overline{U}\setminus U_o\to \R$. In addition, we require that $f$ satisfies:
	\begin{enumerate}[(i)]
		\item the scalar curvature of $\graph[f]$, denoted $R(f)$, is non-negative,
		\item almost every level set $f^{-1}(h)$ is outer-minimizing and strictly mean convex, and
		\item $f$ is locally constant on $\pr (\ovr{U} \setminus U_o)$.
	\end{enumerate}
	We say that  $(\Omega,g) \in \mathcal{A}_U^{D}$ if in addition $|\max(f)| \leq D$.
	
\end{definition}

\begin{remark}
The hypotheses in Definition \ref{def-family} arise naturally from the stability of the positive mass theorem for graphical manifolds by Huang and Lee \cite{HuangLee2015}. Here, we also consider the possibility of $|\max(f)|$ to be bounded since in this case the whole family $\A_U^D$ can be embedded into the fixed compact set $[-D,D] \times  \ovr{U}$.
\end{remark}

\begin{remark}
By the assumptions in Definition \ref{def-family}, given such an $f:\overline{U}\setminus U_o\to \R$, the Brown--York mass of $\pr U$, $\m_{\BY}(\pr U)$, is well-defined in terms of the graph function $f$. Throughout, we will simply write $\m_{BY}$ when referring to hypersurfaces in compact graphs, however it should be understood that $H_o$ is taken to come from the isometric embedding given by the graph. As an aside, we mention that there exist generalisations of the Cohn-Vossen Rigidity Theorem \cite{Cohn-Vossen} in higher dimensions (see, for example \cite{DR-rigidity}, so one could impose conditions to make sure any isometric embedding is unique up to rigid motions, however here we focus on isometric embeddings that are part of graphical manifolds.
\end{remark}

Our first result on graphs is to demonstrate that without loss of generality, we can assume that the mean curvature vector of the graphical manifold in $\R^{n+1}$ points upwards. Throughout, we compute the mean curvature of the graphical manifolds $\widehat H$ in $\R^{n+1}$ with respect to the downward-pointing unit normal. The following theorem is a localized version of \cite[Theorem 3]{HuangWu2015}.

\begin{thm}\label{thm-MCgraph}
	Assume $U \subset \R^n$ is a bounded open connected set and  $U_o \Subset U$ is open (not necessarily connected). Let $M$ denote the graph of $f\in C^{n+1}(\ovr{U}\backslash U_o)$ with boundaries $\Sigma_o$ and $\Sigma$ corresponding to $\partial U_o$ and $\partial U$, respectively. Suppose each connected component of $\partial U\cup\partial U_o$ is a level set of $f$ with $|Df|\to\infty$ as $x\to\partial U_o$. In addition, suppose that $\pr U$ is strictly mean convex in $\R^n$. If the scalar curvature of the graph of $f$ is non-negative, then its mean curvature $\widehat H$ has a sign.
\end{thm}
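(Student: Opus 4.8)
The plan is to exploit the structure of the scalar curvature of a graph in $\R^{n+1}$, which is a divergence-type quantity, together with the minimal boundary condition $|Df| \to \infty$ on $\pr U_o$ and the strict mean convexity of $\pr U$. Recall that for a graph of $f$ over $\R^n$, the scalar curvature $R(f)$ can be written in the form
\be
R(f) = \frac{1}{\sqrt{1+|Df|^2}} \, \dv\!\left( \frac{Df}{\sqrt{1+|Df|^2}} \left( \dots \right) \right),
\ee
where the divergence is of a vector field built out of $Df$ and $D^2 f$; more precisely there is a vector field $V(f)$ (essentially $\frac{1}{1+|Df|^2}\left( (\Lap f) Df - D^2 f(Df, \cdot) \right)$ up to normalization) such that $\sqrt{1+|Df|^2}\,R(f) = 2\,\dv V(f)$, and crucially $V(f)$ is, up to a positive factor, the mean curvature one-form contracted appropriately so that $\int_{\pr U} \langle V(f), \nu \rangle$ computes (a multiple of) $\int_{\pr U} \widehat H \, dS$ on each level set. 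This is exactly the identity underlying Lam's and Huang--Wu's treatment of graphical mass. First I would write down this divergence identity precisely in dimension $n$, citing \cite{HuangWu2015}, and record that integrating $R(f) \geq 0$ over $\ovr{U}\setminus U_o$ gives a sign for the total flux of $V(f)$ through $\pr U \cup \pr U_o$.

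Next I would analyze the boundary contributions. On $\pr U_o$, the condition $|Df| \to \infty$ together with $f$ being locally constant there (so $Df$ is parallel to the outward normal near $\pr U_o$) forces the flux of $V(f)$ through $\pr U_o$ to vanish in the limit: the normalizing denominators $\sqrt{1+|Df|^2}$ kill the contribution exactly as in the computation showing that a minimal boundary contributes a horizon term of the form $|\Sigma_o|$ with the right sign, but here — since we only want a sign, not the sharp Penrose term — it suffices that the $\pr U_o$ flux has a definite sign (it is $\leq 0$ with the orientation pointing into $U_o$, or $0$). Then the sign of $R(f)$ propagates to give a sign for $\int_{\pr U}\langle V(f),\nu\rangle$, i.e. for $\int_{\pr U} \widehat H\, H_{\pr U}^{\R^n}\, dS$ where $H_{\pr U}^{\R^n} > 0$ by the strict mean convexity hypothesis. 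So far this only controls an \emph{integral}, not the pointwise sign of $\widehat H$.

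To upgrade from an integral sign to a pointwise sign, I would argue that $\widehat H$ cannot change sign. Since each connected component of $\pr U$ is a level set of $f$ and the graph is smooth over $\ovr U \setminus U_o$, the zero set $\{\widehat H = 0\}$ has empty interior unless $\widehat H \equiv 0$ (a smooth graph whose mean curvature vanishes on an open set is flat there, and then by the level-set/analytic-type continuation built into the structure — or simply by the connectedness of $U$ and the maximum principle for the mean curvature operator — it is flat everywhere, giving $\widehat H \equiv 0$, which still "has a sign" in the weak sense of the statement). The genuine content is ruling out a sign change: I would run a maximum-principle / strong-unique-continuation argument for the quasilinear mean curvature operator, using that on each level set $f$ is constant so that the operator degenerates in a controlled way and the level sets foliate $\ovr U \setminus U_o$ by hypothesis (assumption that a.e. level set is mean convex, plus $|Df|\to\infty$ at the inner boundary, organizes the sublevel sets monotonically). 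Combining the monotone foliation with the integral sign obtained above then forces $\widehat H$ to have the same sign on every leaf, hence everywhere.

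The main obstacle I anticipate is precisely this last step: turning the integrated inequality into a pointwise statement. The divergence identity only directly gives control of $\int_{\pr U}\widehat H\,(\text{positive weight})$, and a priori $\widehat H$ could be positive on part of $\pr U$ and negative on another part, or oscillate in the interior. The clean way around it — and I expect this is what the authors do, following \cite{HuangWu2015} — is to apply the divergence identity not on all of $\ovr U \setminus U_o$ but on each superlevel (or sublevel) region $\{f > h\}$ (respectively $\{f < h\}$) separately: since almost every level set is strictly mean convex and outer-minimizing, the same computation on $\{f>h\}$ yields that the flux of $V(f)$ through $\{f=h\}$ has a fixed sign for a.e. $h$, and on a level set the weight reduces to a positive multiple of $\widehat H |Df|$ (or similar), so one gets $\int_{\{f=h\}} \widehat H\,(\text{positive}) \geq 0$ for a.e. $h$; letting the level set shrink to a point (or using that this holds for a.e. $h$ in a range exhausting $\ovr U$) and invoking continuity of $\widehat H$ yields the pointwise sign. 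The delicate points are the regularity of level sets (handled by the coarea formula and Sard-type arguments, since they are "a.e." level sets) and the behavior as $h$ approaches the value of $f$ on $\pr U$ versus $\pr U_o$; these are technical but standard, so I would treat them briskly and concentrate the write-up on the divergence identity and the level-set flux computation.
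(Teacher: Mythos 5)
There is a genuine gap, and it is the one you yourself flag: passing from integrated to pointwise information. The divergence identity you invoke is the one underlying \eqref{eq-mono-ql}: its boundary fluxes are of the form $\int_{\S_h}\tfrac{|Df|^2}{1+|Df|^2}\FH_{\S_h}\,dV_h$, where $\FH_{\S_h}$ is the mean curvature of the \emph{level set} $\S_h$ as a hypersurface of $\R^n$ --- not the mean curvature $\widehat H$ of the graph as a hypersurface of $\R^{n+1}$, which is the quantity the theorem is about. So the flux computation does not even produce an integral of $\widehat H$ against a positive weight, and your proposed remedy (run the identity on each superlevel region and let the level sets degenerate) cannot close the gap: the level sets foliate a region between $\pr U$ and $\pr U_o$ and do not shrink to points, and in any case a sign for $\int_{\S_h}\widehat H\,w\,dV_h$ for a.e.\ $h$ with $w>0$ would still permit $\widehat H$ to change sign along each leaf.

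The missing idea is pointwise, not integral: for a hypersurface of Euclidean space the Gauss equation gives $R=\widehat H^2-|\widehat\Pi|^2$, so $R\geq 0$ forces $|\widehat\Pi|^2\leq \widehat H^2$, and in particular every point where $\widehat H=0$ is a geodesic point ($\widehat\Pi=0$). This is what the paper's proof exploits. It starts from $\widehat H>0$ on the graph over $\pr U$ (Corollary 2.3 of \cite{HuangWu2013}, using strict mean convexity of $\pr U$), supposes $\widehat H<0$ somewhere, lets $M_+$ be the component of $\{\widehat H\geq0\}$ containing the outer boundary, and takes a connected component $V_\alpha$ of $\ovr{U}\setminus\pi(M_+)$; then $\Gamma=\pi^{-1}(\pr V_\alpha)$ consists of geodesic points and hence, by Lemma 3.6 of \cite{HuangWu2013}, lies in a hyperplane. (The hypothesis $|Df|\to\infty$ on $\pr U_o$ enters only to ensure $\Gamma$ avoids $\S_o$.) Writing $M$ locally as a graph $u$ over that hyperplane with $u=|Du|=|D^2u|=0$ on $\Gamma$, the maximum principle for the mean curvature operator (Theorem 3.9 of \cite{HuangWu2013}) forces $u\equiv0$ over $V_\alpha$, hence $\widehat H\equiv0$ there, contradicting $V_\alpha\cap\pi(M_+)=\emptyset$. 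Your sketch gestures at a maximum principle and unique continuation, but without the geodesic-point structure of $\{\widehat H=0\}$ (and the resulting hyperplane containing $\Gamma$) there is no flat comparison surface against which to run the maximum principle, so the argument as proposed does not go through.
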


\begin{proof}
By Corollary 2.3 of \cite{HuangWu2013}, we know that $\widehat H > 0$ on $f(\pr U)$, so we assume that the mean curvature is negative somewhere in order to arrive at a contradiction. Let $M_+$ be a connected component of $\{p\in M: \widehat H\geq 0\text{ at $p$}\}$ that contains a neighbourhood of $f(\pr U)$. Note that the graph $M$ is homeomorphic to $U\backslash U_o$ given by the projection $\pi(x, f(x)) = x$, and write $\ovr{U}\backslash \pi(M_+)$ as the disjoint union $\cup_{\alpha}V_{\alpha}$ where each $V_{\alpha}$ is connected. Clearly such a $V_\alpha$ must exist by our assumption that the mean curvature is negative somewhere. Because $\pi(M_+)$ is connected, each $\partial V_{\alpha}$ is connected by Proposition A.3 of \cite{HuangWu2013}, noting that $V_\alpha$ is separated from $f(\pr U)$ by construction. Now fix some $V_\alpha$ and set $\Gamma=\pi^{-1}(\partial V_{\alpha})$. By the Gauss equation $0\leq R=\widehat H^2-|\widehat \Pi|^2$, where $\widehat \Pi$ is the second fundamental form of graph $M$ in Euclidean space, we infer that $\Gamma$ is a subset of interior geodesic points of $M$; that is, $\Gamma\subset\{p\in M, \widehat \Pi=0\,\text{at $p$}\}$. Moreover, it follows from \cite[Lemma 3.6]{HuangWu2013} that $\Gamma$ lies in a hyperplane $\Xi$. Note that $\Gamma$ does not intersect $\Sigma_o$ because $|Df|\to\infty$ on $\Sigma_o$. Hence $M$ can be represented as the graph of a $C^{n+1}$-function $u$ in an open neighborhood of $\Gamma$ in $\Xi$, and $u$ satisfies $u = 0$, $|Du| = 0$, $|D^2u| = 0$ on $\Gamma$. But since $V_{\alpha}$ is bounded and the scalar curvature $M$ is non-negative, by the maximum principle for the mean curvature $\widehat H$ we get $u\equiv 0$ on $\pi^{-1}(V_{\alpha})$, see \cite[Theorem 3.9]{HuangWu2013}. Hence the mean curvature vanishes on $\pi^{-1}(V_{\alpha})\subset M$. It follows that $\pi^{-1}(V_{\alpha})\subset M_+$ which is a contradiction. 
\end{proof}

From now on we denote the level sets of a compact graph $f:\ovr{U} \setminus U_o \To \R$ by
\bee
\S_h \defeq \{ f = h \},
\eee
for a regular value $h$ and set $\S \defeq \S_{\max(f)}$ and $\S_o \defeq f(\pr U_o)$, where $\pr U_o$ is the minimal boundary.

\begin{figure}[H]
	
	\begin{tikzpicture}
	\node[anchor=south west,inner sep=0] at (0,0) {\includegraphics[scale=.9]{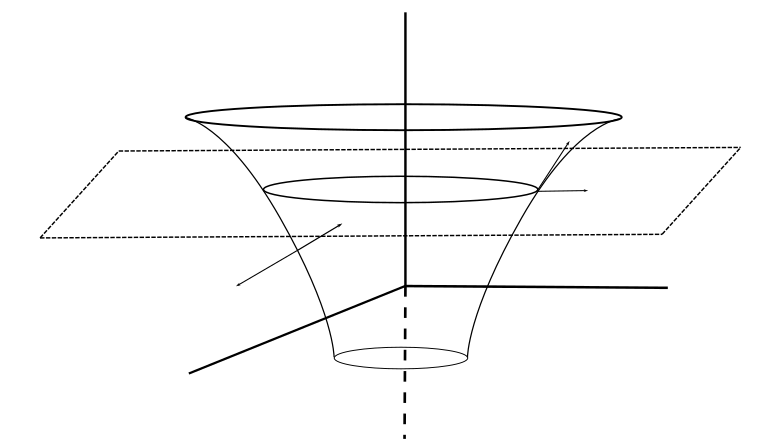}};
	\node at (7.2,4.3) {$-\widehat{H}\widehat{N}$};
	\node at (4.2,2.8) {$\widehat{N}$};
	\node at (11.6,4.8) {$n$};
	\node at (4.6,4.7) {$\S_h$};
	\node at (10.8,5.9) {$\nu$};
	\node at (13.7,3.7) {$\{x^{n+1}=h\}$};
	\node at (12.6,2.5) {$\R^n$};
	\node at (7.8,8.8) {$x^{n+1}$};
	\node at (12.8,6.5) {$\graph[f]$};
	
	\end{tikzpicture}
	\caption{A graphical manifold showing conventions used throughout.}
	\label{fig-mean-c-conv}
\end{figure}

Throughout, we use the notation $H_{\S_{h}}$ to denote the outward mean curvature of the level set $\S_h$ in $\graph[f]$ and $\FH_{\S_{h}}$ to denote the outward mean curvature of the same surface as a hypersurface in $\R^n\cong\{ x_{n+1}=h \}\subset \R^{n+1}$.

Theorem \ref{thm-MCgraph} can be interpreted as follows (cf. \cite{HuangLee2015}). If a compact graphical manifold has non-negative scalar curvature,  then without loss of generality we can assume that its mean curvature vector points upward (see Figure \ref{fig-mean-c-conv} above). This in turn implies that the level sets $\S_h$ (for regular values) are weakly mean convex with respect to the outward normal unit vector (in the hyperplane $\R^n$ containing them) (cf. \cite[Corollary 2.11]{HuangLee2015}).

Given a compact graph $f:\ovr{U} \setminus U_o \To \R$, denote by $\ovr{f}$ the function obtained by extending $f$ to $\ovr{U}$ in such a way that $\ovr{f}$ is constant on $\ovr{U_o}$. Note that this is possible since by assumption $f$ is locally constant on $\pr U_o$. Define the set 
\bee
\Omega_h \defeq \{ x \in \ovr{U} \, | \, f(x) < h \},
\eee
and denote by $\S_h$ the reduced boundary $\pr^*\Omega_h$. Note that almost every $h \in [\min(f),\max(f)]$ is a regular value by Sard's theorem, and when $h$ is a regular value, $\S_h$ is precisely the level set $\{ f = h \}$ as defined above. We define the function
\bee
\V(h) = |\S_h|,
\eee
where $|\cdot|$ denotes the $(n-1)$-dimensional Hausdorff measure. From \cite{HuangLee2015} (cf. \cite{CKP}), we have the following lemma.

\begin{lemma}\label{lemma-Vincr}\cite[Lemma 3.3]{HuangLee2015}
Let $f:\ovr{U} \setminus U_o \To \R$ be a compact graphical manifold such that $\S_h$ is outer-minimizing for almost every $h \in f(\ovr{U})$. Then, $\V$ is lower semicontinuous, non-decreasing, and $\V(h) \leq |\pr U|$ for all $h \in f(\ovr{U})$.
\end{lemma}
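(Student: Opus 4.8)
The plan is to establish the three claimed properties of $\V(h) = |\S_h| = |\pr^*\Omega_h|$ using the structure of the sublevel sets $\Omega_h$ as sets of finite perimeter. First I would observe that the family $\{\Omega_h\}$ is nested: if $h_1 < h_2$ then $\Omega_{h_1} \subset \Omega_{h_2}$, with $\Omega_h \subset \ovr{U}$ for all $h$, so each $\Omega_h$ has finite perimeter (its topological boundary is contained in a graph of a smooth function together with $\pr U$, hence has finite $\H^{n-1}$-measure). For the bound $\V(h) \leq |\pr U|$, the key point is that $\Omega_h$ is contained in $U$ and, for almost every $h$, $\S_h$ is outer-minimizing; since $U$ itself is a competitor region (it contains $\Omega_h$ and has the same "outer part"), the outer-minimizing property of $\S_h$ in $\ovr{U}$ gives $|\pr^*\Omega_h| = |\S_h| \leq |\pr U|$ directly. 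One must be slightly careful about boundary components: the reduced boundary $\pr^* \Omega_h$ should be compared only against enclosing sets, and the minimal boundary $\S_o$ (where it exists) is common to all the relevant regions, so it cancels out of the comparison.

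For monotonicity, I would again use the outer-minimizing hypothesis. Fix regular values $h_1 < h_2$ (the general case follows by the lower semicontinuity established below, or by approximation). Since $\Omega_{h_1} \subset \Omega_{h_2}$ and $\S_{h_1}$ is outer-minimizing, $\Omega_{h_2}$ is an admissible competitor that encloses $\Omega_{h_1}$, so $|\S_{h_1}| \leq |\pr^* \Omega_{h_2}| = |\S_{h_2}|$, i.e. $\V(h_1) \leq \V(h_2)$. Lower semicontinuity is then a consequence of the lower semicontinuity of perimeter under $L^1$ convergence: as $h_k \to h$, one has $\chi_{\Omega_{h_k}} \to \chi_{\Omega_h}$ in $L^1(\R^n)$ (using that $\Omega_h = \bigcup_{k} \Omega_{h_k}$ for $h_k \uparrow h$, or that the symmetric differences have measure tending to zero, which holds because $\{f = h\}$ has measure zero for a.e. $h$ and one can reduce to such $h$), whence $|\pr^* \Omega_h| \leq \liminf_k |\pr^* \Omega_{h_k}|$, which is exactly $\V(h) \leq \liminf_k \V(h_k)$.

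The main obstacle I anticipate is the careful bookkeeping near the minimal boundary $\S_o$ and near $\pr U$: one needs to make sure that "outer-minimizing" is being used with the correct notion of enclosure so that these fixed boundary pieces genuinely drop out of every comparison, and that the competitor sets used (namely $\Omega_{h_2}$ and $U$) are legitimate — in particular that they are sets of finite perimeter containing $\Omega_h$ with the required topological compatibility. A secondary technical point is the passage from regular values to all $h \in f(\ovr{U})$: since regular values are dense (Sard) and $\V$ restricted to them is monotone and bounded, it extends to a monotone bounded function on all of $f(\ovr{U})$, and one checks this extension agrees with $|\pr^* \Omega_h|$ via the $L^1$-approximation argument above. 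Since this lemma is quoted verbatim from \cite[Lemma 3.3]{HuangLee2015}, I would present it briefly and refer there for the details of the finite-perimeter manipulations.
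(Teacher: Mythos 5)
Your argument is correct, and since the paper offers no proof of this lemma --- it simply cites \cite[Lemma 3.3]{HuangLee2015} --- the relevant comparison is with that reference, whose proof proceeds exactly as you propose: monotonicity and the bound $\V(h)\leq|\pr U|$ come from testing the outer-minimizing property of $\S_h$ against the enclosing competitors $\Omega_{h'}$ ($h'>h$) and $U$, and lower semicontinuity comes from $\chi_{\Omega_{h_k}}\to\chi_{\Omega_h}$ in $L^1$ for $h_k\uparrow h$ together with lower semicontinuity of perimeter. The only points to tighten are that finiteness of $\V(h)$ at critical values should be deduced from lower semicontinuity of perimeter rather than from the topological boundary (a critical level set can have infinite $\H^{n-1}$-measure), and that lower semicontinuity at an arbitrary $h$ is obtained by combining the left-approximation argument with the already-established monotonicity, rather than by reducing to levels where $\{f=h\}$ is null.
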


\begin{remark}
We warn the reader about a slight abuse of notation. Notice that $\S$ has two meanings: a closed hypersurface of manifold $M$ and the level set $\S = \S_{\max(f)}$ of a compact graphical manifold $\graph[f]$. Although this abuse of notation is undesirable, no confusion should arise since its meaning will always be clear from context, and otherwise it will be explicitly stated. Also, for a closed hypersurface $\S$ in a manifold $M$, it is common to denote by $H$ and $H_o$ its mean curvatures in $M$ and $\R^n$ (whenever $\S$ can be isometrically embedded into $\R^n$), respectively. In the case of a level set $\S_h$ in a graph $\graph[f]$, we will always use $H_{\S_h}$ and $\FH_{\S_h}$ to denote its mean curvatures in $\graph[f]$ and $\R^n$, respectively.
\end{remark}

Next, let us recall some observations of Lam \cite{Lam} that follow from some standard computations. Since \cite{Lam} is only considering dimension 3, we repeat the proof here in higher dimensions for completeness.

\begin{lemma}[\cite{Lam}]  \label{lemma-mBYbound}
Let $\Omega$ be a compact manifold with boundary $\pr \Omega$ that has positive outward mean curvature and is isometric to a strictly convex closed embedded sphere in $\R^n$. Let $H$ be the mean curvature of $\pr \Omega$ in $\Omega$ and $H_o$ be its mean curvature when isometrically embedded into $\R^n$. Then, 
\be \label{eq-BYLamestimate}
\m_{\BY}(\pr \Omega)\geq \frac{1}{2(n-1)\omega_{n-1}}\int_{\pr \Omega} H_o\left( 1-\frac{H^2}{H_o^2} \right) \, d\s.
\ee 
If we additionally impose non-negative scalar curvature on $\Omega$, equality holds if and only if $\Omega$ is isometric to a domain in $\R^n$. 
Moreover, suppose that $f:\ovr{U} \setminus U_o \To \R$ is a compact graphical manifold and the level set $\S_h \defeq \{ f = h \}$, where $h$ is a regular value, is strictly mean convex, we have
\be \label{eq-mqlequality}
\int_{\S_h} \frac{|Df|^2}{1+|Df|^2} \FH_{\S_h} \, dV_h= \int_{\S_h} \FH_{\S_h} \(1 - \frac{H_{\S_h}^2}{\FH_{\S_h}^2}    \)\, dV_h.
\ee
In particular if $f$ is constant on $\pr U$,
\be \label{eq-BYLam2}
\m_{\BY} \geq \frac{1}{2(n-1)\omega_{n-1}}\int_{f(\pr U)}  \frac{|Df|^2}{1+|Df|^2} \FH_{\S}  \, dA_h,
\ee
where $\m_{BY}$ is the Brown--York mass of $f(\pr U)$ in the graphical manifold.
\end{lemma}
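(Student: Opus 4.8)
The plan is to establish the three claims in order, as the first feeds into the second and the second into the third. First I would prove the general inequality \eqref{eq-BYLamestimate}. The idea is that since $H_o > 0$, one can write the algebraic identity
\[
H_o - H = \frac{H_o}{2}\left(1 - \frac{H^2}{H_o^2}\right) + \frac{(H_o - H)^2}{2H_o},
\]
and since the last term is manifestly non-negative pointwise, integrating over $\pr\Omega$ and dividing by $(n-1)\omega_{n-1}$ gives \eqref{eq-BYLamestimate} immediately from the definition of $\m_{\BY}$ (with the appropriate normalization constant $\frac{1}{(n-1)\omega_{n-1}}$ in place of $\frac{1}{8\pi}$). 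The equality case follows from observing that equality forces $H_o = H$ pointwise, which by Theorem \ref{thm-ShiTam} (using the non-negative scalar curvature hypothesis) means $\Omega$ is isometric to a Euclidean domain. I should be slightly careful that Theorem \ref{thm-ShiTam} is stated with an integral equality rather than a pointwise one, but $\int H_o = \int H$ together with $H_o \geq H$ (Shi--Tam) does give $H_o = H$ a.e.; actually the cleanest route is to note equality in \eqref{eq-BYLamestimate} forces $\int_{\pr\Omega}(H_o - H)^2/H_o\, d\sigma = 0$ hence $H_o = H$, hence equality in \eqref{eq-BYpos}.

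Next I would establish \eqref{eq-mqlequality}, which is purely a computation relating the two mean curvatures $H_{\S_h}$ (in the graph) and $\FH_{\S_h}$ (in the slice $\{x_{n+1}=h\}$). Writing $M = \graph[f]$ with downward unit normal $\widehat N$, the level set $\S_h$ sits inside both $M$ and the hyperplane $\R^n$. The standard computation (as in Lam, and in Huang--Lee) expresses the ratio of these mean curvatures in terms of $|Df|$: at a regular value the unit normal to $\S_h$ within the graph, when compared to the unit normal within the slice, differs by a tilt governed by $|Df|$, and one finds precisely $H_{\S_h}^2 = \FH_{\S_h}^2\big/(1+|Df|^2)$ pointwise on $\S_h$ — equivalently $1 - H_{\S_h}^2/\FH_{\S_h}^2 = |Df|^2/(1+|Df|^2)$. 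Multiplying by $\FH_{\S_h}$ and integrating gives \eqref{eq-mqlequality}; I expect this to be the bookkeeping-heavy step, tracking the induced metric on $\S_h$ and checking that the area element $dV_h$ is the common one, but there is no conceptual obstacle — it is the same computation that underlies the Lam/Huang--Lee mass formula for graphs.

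Finally, \eqref{eq-BYLam2} is just the combination: apply \eqref{eq-BYLamestimate} to $\Omega$ with $\pr\Omega = f(\pr U) = \S$ (using that $\S$ is strictly convex in $\R^n$ and has positive mean curvature $H = H_{\S}$, so $H_o = \FH_{\S}$), then substitute the integrand $H_o(1 - H^2/H_o^2) = \FH_{\S}(1 - H_{\S}^2/\FH_{\S}^2)$, and invoke \eqref{eq-mqlequality} with $h = \max(f)$ to rewrite this as $|Df|^2\FH_{\S}/(1+|Df|^2)$. The hypothesis that $f$ is constant on $\pr U$ ensures $\S = f(\pr U)$ is genuinely a level set at a regular value (or at least that the reduced-boundary formalism of Lemma \ref{lemma-Vincr} applies), so that \eqref{eq-mqlequality} is available. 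The main thing to be careful about is matching the normalization constants and the orientation conventions ($H$ versus $\FH$, outward normals) consistently across the two lemmas; once those line up, the inequality \eqref{eq-BYLam2} drops out with no further work.
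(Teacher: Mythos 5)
Your proposal is correct and follows essentially the same route as the paper: the pointwise identity $H_o-H=\tfrac{H_o}{2}\left(1-\tfrac{H^2}{H_o^2}\right)+\tfrac{(H_o-H)^2}{2H_o}$ is just the paper's integral rearrangement \eqref{eq-mbylam} written pointwise, the equality case is handled identically via Shi--Tam rigidity, and \eqref{eq-mqlequality} and \eqref{eq-BYLam2} follow from the same relation $H_{\S_h}=\FH_{\S_h}/\sqrt{1+|Df|^2}$ and the same combination of the two previous facts. No gaps beyond those already present in the paper's own exposition.
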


\begin{proof}
An elementary computation gives
\begin{align}
\begin{split}\label{eq-mbylam}
0\leq\int_{\S}\frac{1}{H_o}\(H_o-H \)^2\,d\s&=\int_\S H_o-2H+\frac{H^2}{H_o}\,dS\\
&=\int_\S 2\(H_o-H \)-H_o+\frac{H^2}{H_o}\,dS,
\end{split}
\end{align}
which implies \eqref{eq-BYLamestimate}. Note that equality in \eqref{eq-BYLamestimate} implies $H\equiv H_o$ and therefore rigidity follows by rigidity in the positive mass theorem with corners \cite{ShiTam02}.\\

Note that $n=n^i\partial_i=\frac{\delta^{ij}f_j}{|Df|}\partial_i$ is the unit outward normal of $\S_h$ in $(\mathbb{R}^n,\delta)$ and $\nu=\nu^i\partial_i=\frac{g^{ij}f_j}{|\nabla f|_g}\partial_i$ is the unit normal of $\Sigma_h$ in $(M,g=\delta+df^2)$. Then the mean curvature is given by \be \FH_{\S_h}=\text{div}_{\S_h}n=\frac{\Delta_{\mathbb{R}^n}f}{|Df|}-\frac{\text{Hess}f(Df,Df)}{|Df|^3}.
\ee 
A straightforward computation then leads to $H_{\S_h}=\text{div}_{\S_h}\nu=\frac{\FH_{\S_h}}{\sqrt{1+|Df|^2}}$, which gives 
\begin{align*}
\dint_{\S_h} \dfrac{|D f|^2}{1+|D f|^2} \FH_{\S_h} dV_h&=\dint_{\S_h} \( 1 - \dfrac{1}{1+|Df|^2} \) \FH_{S_h}\, dV_h \\
&=\dint_{\S_h} \(  1 - \dfrac{ \FH_{\S_h}^2}{ \FH_{\S_h}^2(1+|Df|^2)} \) \FH_{\S_h}\, dV_h  \\
&=\dint_{\S_h} H_{\S_h}\( 1 - \dfrac{H_{\S_h}^2}{\FH_{\S_h}^2} \) dV_h.
\end{align*}

Finally, combining \eqref{eq-mqlequality} and \eqref{eq-BYLamestimate} gives \eqref{eq-BYLam2}.

\end{proof}

\begin{remark}
	While the \eqref{eq-BYLamestimate} follows from an elementary computation, it is interesting to note that it is also exactly the estimate one obtains by applying Lemma 2.11 of \cite{ShiTam02}, setting the function $u$ therein equal to $\frac{H_o}{H}$ and noting that
	\bee
	M(r) := \int_{\S_r} H_o(1-u^{-2})\, d\sigma_r
	\eee 
	is monotonically increasing and converges to $2(n-1)\omega_{n-1}\pi\m_{ADM}$ as $r \to \infty$. That is, $M(r)=\int_{\S_r}H_o\( 1-\frac{H^2}{H_o^2}\)d\sigma$ and $\m_{BY}(\S)\geq \m_{ADM}$ in the Shi--Tam extensions. In particular, we observe that the quantity $\frac{1}{16\pi}\int_{\S_r}H_o\( 1-\frac{H^2}{H_o^2}\)d\sigma$ proposed by Lam \cite{Lam} as a quasi-local mass arises naturally here, without reference to graphical manifolds.
\end{remark}

From the above Lemma, some corollaries follow immediately. We briefly note them here.
\begin{coro}
	Let $\S$ be a closed hypersurface with positive outward mean curvature, in a Riemannian $n$-manifold such that the Brown--York mass of $\S$ with respect to any isometric embedding into $\R^{n}$ is negative, if any such isometric embeddings exist. Then there does not exist any neighborhood of $\S$ that can be isometrically embedded in $\R^{n+1}$ as a graph.
\end{coro}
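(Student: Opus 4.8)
The proof will be a short argument by contradiction, resting on the identity $H_{\S_h}=\FH_{\S_h}/\sqrt{1+|Df|^2}$ obtained in the course of proving Lemma~\ref{lemma-mBYbound}. Suppose, for the sake of contradiction, that some neighbourhood $N$ of $\S$ in the ambient $n$-manifold admits an isometric embedding into $\R^{n+1}$ whose image is a graphical manifold $\graph[f]$. Since $\S$ has positive outward mean curvature it is not a minimal surface, so $|Df|$ is finite along the image of $\S$; after shrinking $N$ we are reduced to the case in which this image is a level set $\{f=h\}$ of $f$, and after the reflection $x_{n+1}\mapsto-x_{n+1}$ of the ambient space if necessary (which carries graphs to graphs and preserves the scalar curvature) we may assume the outward unit normal of $\S$ points in the direction of increasing $x_{n+1}$, so that $\S$ is the outer boundary of the compact graphical manifold $\graph\big[f|_{\{f\le h\}}\big]$. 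In particular $\S$ is isometrically embedded in the hyperplane $\{x_{n+1}=h\}\cong\R^{n}$, where its mean curvature is $\FH_\S$.

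With this in place, the computation in the proof of Lemma~\ref{lemma-mBYbound} gives $H_\S=\FH_\S/\sqrt{1+|Df|^2}$; since $\S$ has positive outward mean curvature in $\graph[f]$ this forces $\FH_\S>0$ (so $\S$ is strictly mean convex in $\R^n$) and, moreover, $H_\S\le\FH_\S$ pointwise on $\S$. Inserting this into \eqref{eq-BYLam2} --- equivalently, into \eqref{eq-BYLamestimate} with the choice $H_o=\FH_\S$ --- gives $\m_{\BY}(\S)\ge\tfrac{1}{2(n-1)\omega_{n-1}}\int_{\S}\tfrac{|Df|^2}{1+|Df|^2}\,\FH_\S\,d\sigma\ge 0$, where $\m_{\BY}(\S)$ is computed with respect to the isometric embedding of $\S$ into $\R^n$ furnished by the level set $\{f=h\}$. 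This contradicts the hypothesis that the Brown--York mass of $\S$ is negative with respect to \emph{every} isometric embedding of $\S$ into $\R^n$, and the corollary follows.

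The only delicate point --- and the step I expect to require the most care --- is the geometric reduction in the first paragraph: one must justify that a graphical neighbourhood of a mean-convex closed hypersurface can be presented so that the hypersurface itself is a constant-height slice, whence the Euclidean comparison hypersurface in the definition of the Brown--York mass may be taken to be that slice. For the manifolds in the class $\A_U$ treated here this is automatic, since the defining function is locally constant on $\pr U$; in general it comes from restricting $f$ to the sublevel set cut off by the slice through $\S$ and observing that the outer boundary of the resulting compact graphical manifold carries positive, rather than vanishing, mean curvature precisely because $|Df|$ is finite there. Once the identification is made, everything else is an immediate consequence of Lemma~\ref{lemma-mBYbound}.
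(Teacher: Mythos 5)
Your core argument is exactly the one the paper intends: the corollary is stated there without proof, as an immediate consequence of Lemma \ref{lemma-mBYbound}, and the intended mechanism is precisely the identity $H_{\S_h}=\FH_{\S_h}/\sqrt{1+|Df|^2}$, which shows that for a level set the graph-induced isometric embedding into the hyperplane $\{x^{n+1}=h\}\cong\R^{n}$ satisfies $0<H\leq H_o$ pointwise, whence $\m_{\BY}\geq 0$ for that particular embedding (via \eqref{eq-BYLam2}, or simply from $\int_\S (H_o-H)\,d\sigma\geq 0$), contradicting the hypothesis that the Brown--York mass is negative for every isometric embedding. Your second paragraph, including the observation that $H>0$ forces $\FH_\S>0$ and the orientation fix by reflection, is correct.

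The step that does not hold up is the reduction in your first paragraph. Shrinking the neighbourhood $N$ does not change where the isometric embedding sends $\S$, so it cannot force the image of $\S$ to be a level set $\{f=h\}$. If the embedding carries $\S$ to a closed hypersurface of the graph that is not a constant-height slice, that hypersurface need not lie in any hyperplane of $\R^{n+1}$, the Euclidean comparison curvature $\FH_\S$ is not defined, and neither your computation nor Lemma \ref{lemma-mBYbound} applies; your later gloss (``restricting $f$ to the sublevel set cut off by the slice through $\S$'') presupposes exactly the level-set presentation that is in question. The honest repair is to invoke the paper's standing convention for graphical manifolds (Definition \ref{def-family}, condition (iii): the graph function is locally constant on the relevant boundary), so that $\S$ is \emph{by hypothesis} realized as a level set whenever a neighbourhood of it is ``embedded as a graph'' in the sense of this paper; with that reading your argument is complete. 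As written, however, the claim that the level-set presentation can be arranged by shrinking $N$ is a genuine gap, and it is the only one.
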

In the following, when we speak of an asymptotically flat manifold, we mean it in the usual sense. That is, the asymptotics are sufficient to ensure that the ADM mass is well-defined, and in particular, the scalar curvature is integrable.
\begin{coro}
	Let $(M,g)$ be an asymptotically flat $n$-manifold and assume that there is a neighborhood of infinity $(M,g)$ that can be realized as a graph in $\R^{n+1}$ such that almost every level set $\S_h$ is strictly mean convex. Then $(M,g)$ has non-negative mass.
\end{coro}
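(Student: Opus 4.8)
The plan is to bound $\m_{\ADM}(M)$ below by the Brown--York mass of a suitable level set of the graphical end, and then to apply Lemma~\ref{lemma-mBYbound}. Write the neighbourhood of infinity as $E=\graph[f]$ with $f:\R^n\setminus K\to\R$, $K$ compact, so that almost every level set $\S_h=\{f=h\}$ is strictly mean convex in $\R^n$. As in the discussion following Theorem~\ref{thm-MCgraph} we may assume the mean curvature vector of $E$ points upward (Theorem~\ref{thm-MCgraph} applies to any compact sub-graph of $E$ whose outer boundary is a strictly mean convex level set, and such a level set exists by hypothesis); hence $\FH_{\S_h}\geq 0$ for almost every $h$, with $H_{\S_h}=\FH_{\S_h}/\sqrt{1+|Df|^2}>0$ wherever $\FH_{\S_h}>0$.

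Next I would normalise, using asymptotic flatness, so that $f\to 0$ at infinity; say $f>0$ near infinity (the other case is symmetric). After enlarging $K$ to a large coordinate ball one arranges that $f>0$ on $\partial K$ as well, so that for a regular value $h_0$ with $\S_{h_0}$ strictly mean convex and $0<h_0<\min_{\partial K}f$, the unbounded component $E'$ of $\{(x,f(x)):f(x)<h_0\}$ is a neighbourhood of infinity whose topological inner boundary is exactly the level set $\S_{h_0}$. Then $\m_{\ADM}(M)=\m_{\ADM}(E')$, and $E'$ is an asymptotically flat $n$-manifold (with non-negative scalar curvature --- this is the natural running hypothesis and is what the Shi--Tam step below requires) whose boundary $\S_{h_0}$ has positive outward mean curvature in $E'$ and is isometric to a mean convex closed hypersurface in $\R^n$ via its inclusion in the hyperplane $\{x_{n+1}=h_0\}$. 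The positivity argument of Shi and Tam \cite{ShiTam02} --- in its monotonicity form for asymptotically flat ends, and appealing to the relaxed convexity hypothesis of Eichmair, Miao and Wang \cite{EMW} if $\S_{h_0}$ is not convex --- then yields
\[
\m_{\ADM}(M)=\m_{\ADM}(E')\ \geq\ \m_{\BY}(\S_{h_0}),
\]
with $\m_{\BY}$ computed for the above Euclidean embedding. Applying \eqref{eq-BYLam2} of Lemma~\ref{lemma-mBYbound} to the compact graph $f|_{\{h'\leq f\leq h_0\}}$, on whose outer boundary $f$ is constant, gives
\[
\m_{\BY}(\S_{h_0})\ \geq\ \frac{1}{2(n-1)\omega_{n-1}}\int_{\S_{h_0}}\frac{|Df|^2}{1+|Df|^2}\,\FH_{\S_{h_0}}\,dA\ \geq\ 0
\]
since $\FH_{\S_{h_0}}\geq 0$, and hence $\m_{\ADM}(M)\geq 0$. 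When $|f|\to\infty$ at infinity --- which happens, for instance, for $n=3$ whenever $\m_{\ADM}\neq 0$ --- one can bypass $E'$: the level sets $\S_{h_i}$ exhaust the end for $h_i\to\infty$, $\m_{\BY}(\S_{h_i})\to\m_{\ADM}(M)$ by the standard behaviour of the Brown--York mass of an exhausting family of surfaces in an asymptotically flat end, and each $\m_{\BY}(\S_{h_i})\geq 0$ by the same use of \eqref{eq-BYLam2}; this variant needs no scalar curvature hypothesis.

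I expect the main obstacle to be the interface between the graphical structure and the Shi--Tam construction: one must produce a level set $\S_{h_0}$ that both encloses the (possibly non-graphical) compact core of $M$ and is admissible for the Shi--Tam positivity argument. The first point requires a little control on $f$ near infinity --- eventual single sign, or divergence --- which is automatic under the usual asymptotic flatness conditions but worth a remark; the second is why one invokes \cite{EMW}, or alternatively uses the graph itself to furnish a Shi--Tam-type foliation of the exterior of $\S_{h_0}$. The remaining steps are routine bookkeeping on top of Lemma~\ref{lemma-mBYbound} and Theorem~\ref{thm-MCgraph}.
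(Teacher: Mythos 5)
Your main route breaks at the step $\m_{\ADM}(M)=\m_{\ADM}(E')\geq\m_{\BY}(\S_{h_0})$. The Shi--Tam monotonicity goes the other way: along their foliation of the exterior the quantity $\int_{\S_r}(H_o-H_r)\,d\sigma_r$ is monotone and converges to the ADM mass of the \emph{extension}, so what one obtains is $\m_{\BY}(\S)\geq\m_{\ADM}$, an upper bound on the mass by the Brown--York mass, not a lower one (the paper records exactly this in the remark following Lemma~\ref{lemma-mBYbound}). The inequality you assert is in fact false in general: for coordinate spheres $S_r$ in spatial Schwarzschild of mass $m$ one has $\m_{\BY}(S_r)=r\bigl(1-\sqrt{1-2m/r}\bigr)>m=\m_{\ADM}$. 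On top of this, your route imports a non-negative scalar curvature hypothesis (needed for any Shi--Tam-type argument) that the corollary does not assume; the paper remarks immediately after the corollary that no sign condition on the scalar curvature, indeed no information about the interior whatsoever, is required.

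The argument you relegate to a side remark (``when $|f|\to\infty$ one can bypass $E'$'') is essentially the paper's entire proof, and it is not a special case. One writes, for each regular level set, $\m_{\BY}(\S_h)=\frac{2}{C_n}\int_{\S_h}\bigl(1-\tfrac{1}{\sqrt{1+|Df|^2}}\bigr)\FH_{\S_h}\,dV_h\geq 0$ directly, since $H_o=\FH_{\S_h}\geq \FH_{\S_h}/\sqrt{1+|Df|^2}=H$ pointwise on a mean-convex level set, and then lets $h\to h_{\max}\in\R\cup\{\infty\}$; no divergence of $f$ is needed, only that the level sets exhaust the end with $|Df|\to 0$. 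The convergence $\m_{\BY}(\S_h)\to\m_{\ADM}$, which you attribute to the ``standard behaviour'' of the Brown--York mass along an exhaustion (a delicate matter for general families of surfaces), is here a concrete consequence of Lam's divergence identity for graphs, $\frac{1}{C_n}\int_{\S_h}\frac{|Df|^2}{1+|Df|^2}\FH_{\S_h}\,dV_h=\m_{\ADM}-\int_{E_h}\frac{R(f)}{\sqrt{1+|Df|^2}}\,dV_\delta$, combined with the integrability of the scalar curvature. So the correct proof is the elementary one you mention only in passing; the Shi--Tam scaffolding, the construction of $E'$, and the appeal to \cite{EMW} should all be discarded.
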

\begin{proof}
	Let $\S_{h}$ be the level sets in the statement of the theorem with associated graph function $f$, and define $C_n=2(n-1)\w_{n-1}$.

By direct computation we have
		\begin{align*}
		\m_{\BY}(\S_h) &= \frac{2}{C_n} \int_{\S_h} (H_o - H) \, dV_{h} \\
		&=\frac{2}{C_n} \int_{\S_h} \(1 - \frac{1}{\sqrt{1+|Df|^2}}  \)\FH_{\S_h} \, dV_{h} \\
		&=\frac{2}{C_n} \int_{\S_h} \( \frac{\sqrt{1+|Df|^2}-1}{\sqrt{1+|Df|^2}}  \)\FH_{\S_h} \, dV_{h} \\
		&=\frac{2}{C_n} \int_{\S_h} \( \frac{|Df|^2}{1+|Df|^2}  \) \frac{\sqrt{1+|Df|^2}}{\sqrt{1+|Df|^2}+1}\FH_{\S_h} \, dV_{h} 
		\end{align*}

Then simply by taking the limit as these surfaces go out to infinity, $h\to h_{\max}\in\R\cup\{\infty\}$, we know $|Df|$ goes to zero and therefore from Chapter 7 of \cite{Lam} we have

	\begin{align*}
	\lm{h}{h_{\max}} \m_{\BY}(\S_h)	&= \lm{h}{h_{\max}} \frac{1}{C_n} \int_{\S_h} \( \frac{|Df|^2}{1+|Df|^2}  \) \FH_{\S_h} \, dV_{h} \\
	&=\m_{ADM}-\lm{h}{h_{\max}}\int_{E_h}\frac{R(f)}{\sqrt{1+|Df|^2}}\, dV_{\delta},
	\end{align*}
where $E_h$ denotes the region of the manifold exterior to $\S_h$ and $\m_{ADM}$ denotes the ADM mass of $(M,g)$. In particular, since $R(g)$ is integrable we have
	\bee
	\lm{h}{h_{\max}} \m_{\BY}(\S_h)=\m_{\ADM}.
	\eee
Since $\m_{BY}(\S_h)\geq 0$, we are done.
\end{proof}
	Note that the above corollary does not require a sign condition on the scalar curvature; in fact, it does not require any information about the interior of the manifold whatsoever. It suggests that the positivity of mass in the graphical case is in fact due to the graph property, rather than the non-negativity of scalar curvature.

For graphical manifolds like the ones we consider here, Lam \cite{Lam} proved the Riemannian Penrose inequality holds in all dimensions. In fact, from his work it is straightforward to show that the graphical case of the quasi-local Penrose inequality also holds for the Brown--York mass in all dimensions. We will need this result in Section \ref{Sflatconvergence}.
\begin{thm}\label{thm-QLpenrosegraphs}
Let $f:\ovr{U} \setminus U_o \To \R$ be a compact graphical manifold with non-negative scalar curvature, such that almost every level set is strictly mean convex and outer-minimizing, with $\pr U_o \neq 0$. Suppose that $h_1 \leq h_2$ are two regular values of $f$. Then we have

\be \label{eq-mono-ql}
\int_{\S_{h_2}} \frac{|Df|^2}{1+|Df^2}\FH_{\S_{h_2}} \, dV_{h_2} \geq \int_{\S_{h_1}} \frac{|Df|^2}{1+|Df^2}\FH_{\S_{h_1}} \, dV_{h_1}.
\ee
In particular, we have the following quasi-local Riemannian Penrose inequality:

\be \label{eq-BYpenrosegraphs}
\m_{\BY}(\S)  > \frac12\( \frac{|\S_o|}{\omega_{n-1}} \)^{\frac{n-2}{n-1}}.
\ee 
\end{thm}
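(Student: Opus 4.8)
The plan is to adapt Lam's graphical argument \cite{Lam} to the quasi-local quantity appearing in Lemma \ref{lemma-mBYbound}. The monotonicity \eqref{eq-mono-ql} should follow from a divergence identity for the scalar curvature of a graph together with the hypothesis $R(f)\geq 0$, and the quasi-local Penrose inequality \eqref{eq-BYpenrosegraphs} will then come from letting the inner level set degenerate onto the minimal boundary $\S_o$ and invoking a Minkowski-type inequality there. First I would record Lam's divergence identity: on $\ovr{U}\setminus U_o$ the Euclidean vector field
\bee
X_f \ =\ \frac{\Delta f\, Df - D^2 f(Df,\cdot)^{\sharp}}{1+|Df|^2}
\eee
satisfies $\dv_{\R^n} X_f = R(f)\sqrt{1+|Df|^2}$ (up to a fixed positive dimensional constant, which plays no role below), an identity valid in all dimensions. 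Using the formulas $\FH_{\S_h} = \frac{\Delta f}{|Df|} - \frac{D^2 f(Df,Df)}{|Df|^3}$ and $H_{\S_h} = \FH_{\S_h}/\sqrt{1+|Df|^2}$ recorded in the proof of Lemma \ref{lemma-mBYbound}, one computes that along a regular level set $\S_h$, with Euclidean outward unit normal $n = Df/|Df|$, the normal component is $\langle X_f, n\rangle = \frac{|Df|^2}{1+|Df|^2}\FH_{\S_h}$. For regular values $\min f < h_1 \leq h_2$ the compact region $\{ h_1 \leq f \leq h_2 \}$ is bounded away from $\pr U_o$ (where $|Df|\to\infty$), and there $X_f$ is $C^1$ (its numerator vanishes at interior critical points of $f$); its boundary is $\S_{h_2}$ with outward orientation and $\S_{h_1}$ with inward orientation. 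Applying the divergence theorem and using $R(f)\geq 0$ gives
\bee
\int_{\S_{h_2}}\!\! \frac{|Df|^2}{1+|Df|^2}\FH_{\S_{h_2}}\, dV_{h_2} \ -\ \int_{\S_{h_1}}\!\! \frac{|Df|^2}{1+|Df|^2}\FH_{\S_{h_1}}\, dV_{h_1} \ =\ \int_{\{h_1\leq f\leq h_2\}}\!\! R(f)\sqrt{1+|Df|^2}\,dx \ \geq\ 0,
\eee
which is \eqref{eq-mono-ql}; that each integrand is in fact nonnegative, i.e. $\FH_{\S_h}\geq 0$ for almost every regular value, is precisely what Theorem \ref{thm-MCgraph} provides.

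The second step is to let the inner level set degenerate onto the minimal boundary. Set $h_0 = \min f$, attained on $\S_o = f(\pr U_o)$, and let $h_1 \searrow h_0$ through regular values. Since $|Df|\to\infty$ as $x\to\pr U_o$, the weight $\frac{|Df|^2}{1+|Df|^2}\to 1$ near $\S_o$; combined with the fact that the level sets $\S_{h_1}$ converge to $\S_o$, which lies in a hyperplane and on which $\FH_{\S_o}$ is finite and positive, this should yield
\bee
\lim_{h_1 \searrow h_0}\ \int_{\S_{h_1}}\frac{|Df|^2}{1+|Df|^2}\FH_{\S_{h_1}}\, dV_{h_1}\ =\ \int_{\S_o}\FH_{\S_o}\, d\s.
\eee
Taking $h_2$ to be the level of $\S$ in the monotonicity and combining with the estimate \eqref{eq-BYLam2} of Lemma \ref{lemma-mBYbound} then gives
\bee
\m_{\BY}(\S)\ \geq\ \frac{1}{2(n-1)\w_{n-1}}\int_{\S}\frac{|Df|^2}{1+|Df|^2}\FH_{\S}\, dV\ \geq\ \frac{1}{2(n-1)\w_{n-1}}\int_{\S_o}\FH_{\S_o}\, d\s.
\eee

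To close the argument I would apply a Minkowski-type inequality at the horizon. The hypersurface $\S_o$ is closed, outer-minimizing (by hypothesis, being a limit of outer-minimizing level sets) and mean convex in $\R^n$, so
\bee
\int_{\S_o}\FH_{\S_o}\, d\s\ \geq\ (n-1)\w_{n-1}\left(\frac{|\S_o|}{\w_{n-1}}\right)^{\frac{n-2}{n-1}}
\eee
by the Minkowski inequality for outer-minimizing mean-convex hypersurfaces of Euclidean space (proved via inverse mean curvature flow; see, e.g., \cite{HuangWu2013}). Substituting into the previous display yields $\m_{\BY}(\S)\geq \tfrac12\big(|\S_o|/\w_{n-1}\big)^{(n-2)/(n-1)}$. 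For the strict inequality in \eqref{eq-BYpenrosegraphs}, I would argue that equality would force equality in \eqref{eq-BYLam2}, hence in \eqref{eq-BYLamestimate}; by the rigidity statement of Lemma \ref{lemma-mBYbound} (which uses $R(f)\geq 0$) the manifold $\Omega$ would then be isometric to a domain in $\R^n$, which is impossible since $\Omega$ contains the closed minimal hypersurface $\S_o$ (recall $\pr U_o\neq 0$). Strictness can alternatively be read off from the general quasi-local Penrose rigidity of Lu--Miao \cite{LM}.

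\textbf{The main obstacle} is twofold. First, one must match conventions carefully so that the normal component of $X_f$ along a level set comes out to be exactly $\frac{|Df|^2}{1+|Df|^2}\FH_{\S_h}$, that is, reconcile Lam's ADM-mass computation with the sign and normalization conventions used here. Second, and more delicate, is justifying the limit $h_1\searrow\min f$ in the second step, where $|Df|\to\infty$ on $\S_o$: one has to verify that no boundary contribution is lost in the degeneration and that the limiting integral is genuinely $\int_{\S_o}\FH_{\S_o}\, d\s$ rather than something larger. Invoking the outer-minimizing version of the Minkowski inequality in arbitrary dimension is a further nontrivial external input on which the proof relies.
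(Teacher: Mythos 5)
Your proposal follows essentially the same route as the paper: Lam's divergence identity plus $R(f)\geq 0$ for the monotonicity \eqref{eq-mono-ql}, the degeneration $|Df|\to\infty$ at the horizon combined with \eqref{eq-BYLam2} and the Minkowski inequality for outer-minimizing mean-convex hypersurfaces for \eqref{eq-BYpenrosegraphs}, and the rigidity of Lemma \ref{lemma-mBYbound} for strictness. The only quibble is the bulk term in your divergence identity, which should read $R(f)/\sqrt{1+|Df|^2}$ rather than $R(f)\sqrt{1+|Df|^2}$ against $dV_{\delta}$ (cf.\ equation (5.3) of \cite{Lam}); since only its sign is used, this does not affect the argument.
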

\begin{proof}
	By the proof of Theorem 12 in \cite{Lam} and equation (5.3) therein, for a regular values $h_1 \leq h_2$ we can write
	\be 
		\int_{\Omega_{h_2} \setminus \Omega_{h_1}} \frac{R(f)}{\sqrt{1+|D f|^2}} \, dV_{\delta}=\int_{\S_{h_2}} \frac{|D f|^2}{1+|D f|^2} \FH_{\S_{h_2}} dV_{h_2}-\int_{\S_{h_1}} \frac{|D f|^2}{1+|D f|^2}\FH_{\S_{h_1}} \, dV_{h_2}.
	\ee 
	Since $R(f)\geq0$, then \eqref{eq-mono-ql} follows. Using Lemma \ref{lemma-mBYbound}, we then have
	\be 
		\m_{\BY}(\S)\geq \frac{1}{2(n-1)\omega_{n-1}}\int_{\S_o} \frac{|Df|^2}{1+|Df|^2}\FH_{\S_o} \, dV_{\S_o} = \frac{1}{2(n-1)\omega_{n-1}}\int_{\S_o} \FH_{\S_o} \, dV_{\S_o},
	\ee 
using the fact that $|Df|$ blows up on the horizon. Equation \eqref{eq-BYpenrosegraphs} then follows by the classical Minkowski inequality for mean convex and outer-minimizing surfaces\footnote{While the Minkowski inequality is a classical result, we remark that its extension to outer-minimizing surfaces was noted by Huisken \cite{H09}.} in $\R^n$, which states 
	\be \label{eq-Mink}
	\frac{1}{(n-1)\omega_{n-1}} \int_{\S_o} \FH_{\S_o} \, dV_{\S_o} \geq \( \frac{|\S_o|}{\omega_{n-1}}\)^{\frac{n-2}{n-1}}.
	\ee 

Note that equality in \eqref{eq-BYpenrosegraphs} implies equality in \eqref{eq-BYLamestimate}, which would imply $\Omega$ is isometric to a domain in $\R^n$. Since there exists an interior horizon by hypothesis, we conclude that the inequality is strict.
\end{proof}

Combining Lemma \ref{lemma-mBYbound} and Theorem \ref{thm-QLpenrosegraphs}, we immediately obtain the rigidity case for the quasi-local positive mass theorem, Theorem \ref{thm-ShiTam}, for the set $\A_U$. Note that the set $\A_U$ excludes graphs of constant fuctions (hyperplanes), so in order to state the following corollary, let $\A_C$ denote the set of compact Riemannian manifold with boundary and non--negative scalar curvature that can be isometrically embedded as a constant graph in $\R \times \ovr{U}$.

\begin{coro}
Let $(\Omega,g) \in \A_U \cup \A_C$. Then $\m_{\BY}(\pr U)=0$ if and only if  $(\Omega,g) \in  \A_C$.
\end{coro}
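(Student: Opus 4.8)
The plan is to treat the two implications separately, the only delicate point being the possible presence of a minimal interior boundary.

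For the direction $(\Omega,g)\in\A_C\Rightarrow\m_{\BY}(\pr U)=0$: if $(\Omega,g)$ is isometric to the flat domain $\ovr U\subset\R^n$, then the induced metric on $\pr U$ is realized isometrically in $\R^n$ by $\pr U$ itself, so $H_o\equiv H$ along $\pr U$ and $\m_{\BY}(\pr U)=\frac{1}{(n-1)\omega_{n-1}}\int_{\pr U}(H_o-H)\,d\sigma=0$. (The standing assumption that $\pr U$ is isometric to a strictly convex closed embedded sphere in $\R^n$, which is what makes $\m_{\BY}(\pr U)$ meaningful, also pins down this isometric embedding.)

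For the converse, assume $\m_{\BY}(\pr U)=0$; if $(\Omega,g)\in\A_C$ there is nothing to do, so suppose $(\Omega,g)\in\A_U$, realized as $\graph[f]$ for some $f:\ovr U\setminus U_o\to\R$. I would first dispose of the case $U_o\neq\emptyset$: there $(\Omega,g)$ meets the hypotheses of Theorem \ref{thm-QLpenrosegraphs}, which gives $\m_{\BY}(\pr U)=\m_{\BY}(\S)>\frac12\(|\S_o|/\omega_{n-1}\)^{\frac{n-2}{n-1}}>0$ since $|\S_o|>0$, contradicting $\m_{\BY}(\pr U)=0$. So $U_o=\emptyset$, and then $\pr\Omega=\pr U$ is connected; together with positivity of its mean curvature, the convexity of $\pr U$ in $\R^n$, and $R(f)\ge 0$ --- all built into admissibility and the well-definedness of $\m_{\BY}(\pr U)$ --- Theorem \ref{thm-ShiTam} applies and yields $\int_{\pr U}H_o\ge\int_{\pr U}H$. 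Since $\m_{\BY}(\pr U)=0$ forces equality here, the rigidity clause of Theorem \ref{thm-ShiTam} gives that $(\Omega,g)$ is isometric to a domain in $\R^n$, i.e.\ $(\Omega,g)\in\A_C$. (One can also read this off from Lemma \ref{lemma-mBYbound}: on $\pr U$ one has $H=\FH_{\S}/\sqrt{1+|Df|^2}\le\FH_{\S}=H_o$, so $\m_{\BY}(\pr U)=\frac{1}{(n-1)\omega_{n-1}}\int_{\pr U}(H_o-H)=0$ forces $H\equiv H_o$, hence equality in \eqref{eq-mbylam} and \eqref{eq-BYLamestimate}, and the rigidity case of Lemma \ref{lemma-mBYbound} concludes.)

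The main obstacle is really a matter of bookkeeping rather than analysis: Shi--Tam's rigidity is available only for connected boundary, so it does not cover the case of a minimal interior boundary, and that case is exactly what the graphical quasi-local Penrose inequality (Theorem \ref{thm-QLpenrosegraphs}) is designed to handle, by producing a \emph{strictly} positive lower bound for $\m_{\BY}(\pr U)$. Apart from this, one only needs to keep straight that $H_o$ always refers to the isometric embedding of $\pr U$ coming from the graph, and that the convexity/mean-convexity hypotheses hidden in the phrase ``$\m_{\BY}(\pr U)=0$'' are in force.
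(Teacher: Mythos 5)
Your proof is correct in substance, but it follows a genuinely different route from the paper's. You split into the two cases $U_o\neq\emptyset$ and $U_o=\emptyset$, kill the first with the strict quasi-local Penrose inequality of Theorem \ref{thm-QLpenrosegraphs}, and then invoke the Shi--Tam rigidity (Theorem \ref{thm-ShiTam}, i.e.\ ultimately the positive mass theorem with corners) in the second case. The paper instead uses only the monotonicity statement \eqref{eq-mono-ql} inside Theorem \ref{thm-QLpenrosegraphs}: since $\m_{\BY}(\pr U)=0$ forces $\int_{\pr U}\frac{|Df|^2}{1+|Df|^2}\FH\,dV=0$ by Lemma \ref{lemma-mBYbound}, monotonicity pushes this vanishing down to almost every level set, and strict mean convexity ($\FH_{\S_h}>0$) then gives $|Df|=0$ a.e., so $f$ is constant. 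This buys three things your argument does not: (a) it is self-contained within the graphical framework and avoids the corner PMT entirely; (b) it needs no connectedness or strict convexity of $\pr U$, which Definition \ref{def-family} does not actually guarantee and which Theorem \ref{thm-ShiTam} requires (you wave at these as ``built into admissibility,'' but they are not); and (c) it concludes directly that $f$ is constant, whereas Shi--Tam rigidity only yields ``isometric to a domain in $\R^n$,'' leaving you the (easy but unaddressed) step of identifying that domain with $\ovr{U}$ before you may conclude $(\Omega,g)\in\A_C$. Your treatment of the $U_o\neq\emptyset$ case via strict positivity in \eqref{eq-BYpenrosegraphs} is exactly right and is implicitly how the paper's argument also excludes a horizon (there $|Df|\to\infty$, incompatible with $|Df|\equiv 0$); the difference is only in how the horizon-free case is closed.
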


\begin{proof}
This follows by noting that Theorem \ref{thm-QLpenrosegraphs} implies that the Brown--York mass must vanish for almost every level set of $f$ if it vanishes for $\p U$. Then Lemma \ref{lemma-mBYbound} implies $|Df|$ must vanish on almost every level set.
\end{proof}

\section{Volume estimates} \label{section-vol-est}
The key to proving the main stability result is a volume estimate in terms of the mass, which is pursued in this section, closely following \cite{HuangLee2015}. We start by defining a special height in the graphs which will play an essential role in the volume estimates needed to prove the convergence in the flat norm. In this section we set
\begin{equation}
\m_{\BY}=\m_{\BY}(\pr U).
\end{equation}

\begin{definition} \label{def-h0}
Let $f:\ovr{U} \setminus U_o \To \R$ be a compact graphical manifold with non-negative scalar curvature, such that almost every level set is strictly mean convex and outer-minimizing, and assume $\m_{\BY}>0$. Let $h_o$ be the height defined by 
	\begin{equation}
	h_o(f) \defeq  h_o \defeq \sup\{h: \V(h) \leq 2 (1+\xi)^{\frac{n-1}{n-2}} \w_{n-1}(2\m_{\BY})^{\frac{n-1}{n-2}}\},
	\end{equation}
	where $\xi\geq1$ is some fixed constant. If the above set is empty, we set $h_o \defeq \inf(f)$.  
\end{definition}

\begin{lemma}
Let $f:\ovr{U} \setminus U_o \To \R$ be a compact graphical manifold with non-negative scalar curvature, such that almost every level set is strictly mean convex and outer-minimizing, and let $h$ be a regular value of $f$. Then for any real number $\alpha>0$, we have 
\begin{equation}
\V'(h)>\alpha^{-1}\left[\int_{\S_h}\FH_{\S_h} \, dV_h-\left(1+\alpha^{-2}\right)c_n \m_{\BY}\right],
\end{equation}
where $c_n=2(n-1)\omega_{n-1}$ and $\m_{\BY}$.
\end{lemma}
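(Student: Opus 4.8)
The plan is to combine the first variation of area of the level sets of $f$ with the monotonicity supplied by Theorem~\ref{thm-QLpenrosegraphs} and the comparison in Lemma~\ref{lemma-mBYbound}, after splitting $\S_h$ according to the size of $|Df|$. The starting point is the observation that, at a regular value $h$, the area function $\V$ is differentiable with
\[
\V'(h)\ \ge\ \int_{\S_h}\frac{\FH_{\S_h}}{|Df|}\,dV_h .
\]
This is the graphical‑slicing estimate of \cite{HuangLee2015} (cf.\ \cite{CKP}): the level sets $\{\S_t\}$ sweep out $\ovr{U}\setminus U_o$ with Euclidean normal speed $|Df|^{-1}$, and since $\dv_{\S_h}(|Df|^{-1}\,Df/|Df|)=|Df|^{-1}\FH_{\S_h}$, the first variation of area in $\R^n$ yields the displayed inequality. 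Note $\FH_{\S_h}>0$ on a regular, strictly mean convex level set (by the discussion following Theorem~\ref{thm-MCgraph}), so the integrand is non‑negative; this nonnegativity will be used repeatedly.

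Next I would produce an upper bound for $\int_{\S_h}\frac{|Df|^2}{1+|Df|^2}\FH_{\S_h}\,dV_h$ in terms of $\m_{\BY}$. Inequality \eqref{eq-BYLam2} of Lemma~\ref{lemma-mBYbound} gives $\int_{\S}\frac{|Df|^2}{1+|Df|^2}\FH_{\S}\,dV\le c_n\,\m_{\BY}$ with $c_n=2(n-1)\omega_{n-1}$, and the monotonicity \eqref{eq-mono-ql} from Theorem~\ref{thm-QLpenrosegraphs} propagates this bound down the foliation, so that for every regular value $h$,
\[
\int_{\S_h}\frac{|Df|^2}{1+|Df|^2}\,\FH_{\S_h}\,dV_h\ \le\ c_n\,\m_{\BY}.
\]
(If $\max f$ is not a regular value, apply \eqref{eq-mono-ql} along a sequence of regular values increasing to it.)

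Now fix $\alpha>0$ and split $\S_h=\big(\S_h\cap\{|Df|\le\alpha\}\big)\cup\big(\S_h\cap\{|Df|>\alpha\}\big)$. On the second piece $\frac{|Df|^2}{1+|Df|^2}>\frac{\alpha^2}{1+\alpha^2}=(1+\alpha^{-2})^{-1}$, hence (using $\FH_{\S_h}>0$ and the bound from the previous paragraph)
\[
\int_{\S_h\cap\{|Df|>\alpha\}}\FH_{\S_h}\,dV_h\ <\ (1+\alpha^{-2})\int_{\S_h}\frac{|Df|^2}{1+|Df|^2}\FH_{\S_h}\,dV_h\ \le\ (1+\alpha^{-2})\,c_n\,\m_{\BY},
\]
so that $\int_{\S_h\cap\{|Df|\le\alpha\}}\FH_{\S_h}\,dV_h>\int_{\S_h}\FH_{\S_h}\,dV_h-(1+\alpha^{-2})c_n\m_{\BY}$. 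On the first piece $|Df|^{-1}\ge\alpha^{-1}$, so from the first variation estimate
\[
\V'(h)\ \ge\ \int_{\S_h}\frac{\FH_{\S_h}}{|Df|}\,dV_h\ \ge\ \int_{\S_h\cap\{|Df|\le\alpha\}}\frac{\FH_{\S_h}}{|Df|}\,dV_h\ \ge\ \alpha^{-1}\int_{\S_h\cap\{|Df|\le\alpha\}}\FH_{\S_h}\,dV_h,
\]
and chaining the last two displays gives $\V'(h)>\alpha^{-1}\big[\int_{\S_h}\FH_{\S_h}\,dV_h-(1+\alpha^{-2})c_n\m_{\BY}\big]$. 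The only case where the strict inequality needs a separate remark is when $\S_h\cap\{|Df|>\alpha\}$ is empty; but then strictness follows as soon as $\m_{\BY}>0$, and this holds automatically in the situation of the lemma, since $\m_{\BY}=0$ would force $|Df|\equiv0$ on almost every level set (via \eqref{eq-BYLam2} and \eqref{eq-mono-ql}), hence $f$ constant, so no regular value would carry a genuine $(n-1)$‑dimensional level set.

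The main obstacle is the first step: justifying differentiability of $\V$ at regular values and the inequality $\V'(h)\ge\int_{\S_h}|Df|^{-1}\FH_{\S_h}\,dV_h$, i.e.\ controlling the reduced boundary $\pr^*\Omega_h$ versus the smooth level set $\{f=h\}$ and the behavior of $\S_h$ near $\pr U$. This is precisely the slicing lemma of \cite{HuangLee2015} (cf.\ \cite{CKP}), which I would invoke rather than reprove; everything downstream is an elementary manipulation of the two inequalities above.
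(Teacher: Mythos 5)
Your argument is correct and is essentially the paper's own proof: the same decomposition of $\S_h$ by the size of $|Df|$ relative to $\alpha$, the same first-variation formula $\V'(h)=\int_{\S_h}|Df|^{-1}\FH_{\S_h}\,dV_h$, and the same Lam-type bound from Lemma~\ref{lemma-mBYbound} to control the high-gradient piece. If anything you are slightly more careful than the paper, which cites only Lemma~\ref{lemma-mBYbound} where the monotonicity \eqref{eq-mono-ql} of Theorem~\ref{thm-QLpenrosegraphs} is also needed to transfer the bound from $\pr U$ down to the level set $\S_h$, and which does not comment on the edge case for strictness that you handle at the end.
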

\begin{proof}Let $U_{\alpha}:=\{x\in\mathbb{R}^n:|Df(x)|\geq\alpha\}$ and $L_{\alpha}:=\{x\in\mathbb{R}^n:|Df(x)|<\alpha\}$. Then by Lemma \ref{lemma-mBYbound}, we have 
	\begin{equation}\label{eq-l1}
\begin{split}
\m_{\BY}&\geq \int_{\S_h\cap U_{\alpha}} \frac{|Df|^2}{1+|Df|^2} \FH_{S_h} \, dV_h.\\
&\geq \frac{\alpha^2}{1+\alpha^2}\int_{\S_h\cap U_{\alpha}}\FH_{S_h} \, dV_h.
\end{split}
	\end{equation}
The first variation of area of $\S_h$ in $\mathbb{R}^n$ with normal vector $\partial_h=\phi n_0$, where $\phi=|Df|^{-1}$ and $n_0=|Df|^{-1}Df$ (note that this gives a normal variation along level sets of $\graph[f]$), is
	\begin{equation}\label{eq-l2}
	\begin{split}
	\V'(h)&=\int_{\S_h}\frac{\FH_{\S_h}}{|Df|}\, dV_{h}\\
	&=\int_{\S_h\cap U_{\alpha}}\frac{\FH_{\S_h}}{|Df|}\, dV_{h}+\int_{\S_h\cap L_{\alpha}}\frac{\FH_{\S_h}}{|Df|}\, dV_{h}\\
	&>\frac{1}{\alpha}\int_{S_h\cap L_{\alpha}}\FH_{S_h} \, dV_{h}\\
	&=\frac{1}{\alpha}\left(\int_{S_h}\FH_{S_h} \, dV_{h}-\int_{S_h\cap U_{\alpha}}\FH_{S_h} \, dV_{h}\right)
	\end{split}
	\end{equation}Combining \eqref{eq-l1} and \eqref{eq-l2}, we get the result. 
\end{proof}

\begin{lemma}\label{lemma-Vprime-estimate}
Let $f:\ovr{U} \setminus U_o \To \R$ be a compact graphical manifold with non-negative scalar curvature, such that almost every level set is strictly mean convex and outer-minimizing. Assume $\m_{\BY}=\m_{\BY}(\pr U)>0$ and $h$ is a regular value of $f$ such that $\V(h)>\omega_{n-1}(2\m_{\BY})^{\frac{n-1}{n-2}}$. If $\S_h$ is strictly mean convex and outward-minimizing , then 
\begin{equation}
\V'(h)>c_n\frac{2\m_{\BY}}{3\sqrt{3}}\left[\frac{1}{2\m_{\BY}}\left(\frac{\V(h)}{\omega_{n-1}}\right)^{\frac{n-2}{n-1}}-1\right]^{\frac{3}{2}}
\end{equation}where $c_n=2(n-1)\omega_{n-1}$.
\end{lemma}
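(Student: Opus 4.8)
The plan is to feed the estimate of the preceding lemma into the classical Euclidean Minkowski inequality \eqref{eq-Mink} and then optimise over the free parameter $\alpha$. Throughout, I abbreviate $A \defeq \int_{\S_h}\FH_{\S_h}\,dV_h$ and $B \defeq c_n\m_{\BY} = 2(n-1)\omega_{n-1}\m_{\BY}$, so that the preceding lemma reads
\[
\V'(h) \;>\; \frac{1}{\alpha}\Bigl(A - B - \frac{B}{\alpha^{2}}\Bigr)\qquad\text{for every }\alpha>0 .
\]

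The first step is to produce a usable lower bound for $A$. Since $\S_h$ is strictly mean convex and outward-minimizing as a hypersurface of $\R^{n}\cong\{x_{n+1}=h\}$, and $|\S_h| = \V(h)$, the Minkowski inequality \eqref{eq-Mink} gives $A \geq (n-1)\omega_{n-1}\bigl(\V(h)/\omega_{n-1}\bigr)^{\frac{n-2}{n-1}} = \tfrac{c_n}{2}\bigl(\V(h)/\omega_{n-1}\bigr)^{\frac{n-2}{n-1}}$. Consequently $A - B \geq c_n\m_{\BY}\Lambda$ with $\Lambda \defeq \tfrac{1}{2\m_{\BY}}\bigl(\V(h)/\omega_{n-1}\bigr)^{\frac{n-2}{n-1}} - 1$, and the hypothesis $\V(h) > \omega_{n-1}(2\m_{\BY})^{\frac{n-1}{n-2}}$ is precisely what makes $\Lambda > 0$; in particular $A - B > 0$.

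The second step is a one-variable optimisation. Writing $t\defeq\alpha^{-1}$, the right-hand side of the displayed estimate becomes $(A-B)t - Bt^{3}$; since $A-B>0$ this function of $t>0$ is maximised at its unique critical point $t_{*} = \sqrt{(A-B)/(3B)}$, with maximal value $\tfrac{2}{3}(A-B)t_{*} = \tfrac{2}{3\sqrt3}(A-B)^{3/2}B^{-1/2}$, and the corresponding $\alpha_{*}=1/t_{*}$ is an admissible positive choice precisely because $A-B>0$. Hence
\[
\V'(h) \;>\; \frac{2}{3\sqrt3}\,\frac{(A-B)^{3/2}}{B^{1/2}} \;\geq\; \frac{2}{3\sqrt3}\,\frac{(c_n\m_{\BY}\Lambda)^{3/2}}{(c_n\m_{\BY})^{1/2}} \;=\; \frac{2c_n\m_{\BY}}{3\sqrt3}\,\Lambda^{3/2},
\]
where the middle inequality uses $A-B\geq c_n\m_{\BY}\Lambda$, $B=c_n\m_{\BY}$, and monotonicity of $x\mapsto x^{3/2}$. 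Since $\Lambda$ is exactly the bracketed quantity in the statement, this is the asserted inequality, and the strictness is inherited from the strict inequality in the preceding lemma.

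The argument involves no real obstacle beyond bookkeeping. The two points that matter are: the volume hypothesis $\V(h)>\omega_{n-1}(2\m_{\BY})^{\frac{n-1}{n-2}}$ is exactly the condition guaranteeing $A-B>0$, so that the optimal $\alpha_*$ is admissible and the exponent $3/2$ is well-defined; and the strict mean convexity and outward-minimizing assumptions on $\S_h$ are what license the use of the Euclidean Minkowski inequality. It is cleanest to carry out the optimisation in terms of $A$ and $B$ and only then substitute the Minkowski bound, since $\alpha\mapsto\alpha^{-1}\bigl(A-(1+\alpha^{-2})B\bigr)$ is increasing in $A$.
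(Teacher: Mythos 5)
Your argument is correct and is essentially the paper's own proof: both combine the $\alpha$-parametrized lower bound on $\V'(h)$ from the preceding lemma with the Minkowski inequality \eqref{eq-Mink} and then optimise over $\alpha$, arriving at the same critical point $\alpha_*=\sqrt{3}\,\Lambda^{-1/2}$ and the same maximal value $\tfrac{2}{3\sqrt{3}}c_n\m_{\BY}\Lambda^{3/2}$. The only (immaterial) difference is that you optimise in the abstract variables $A,B$ before inserting the Minkowski bound, whereas the paper substitutes first and optimises afterwards; your explicit remarks on why $A-B>0$ under the volume hypothesis and on monotonicity in $A$ make the bookkeeping cleaner than the paper's.
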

\begin{proof} By the Minkowski inequality \eqref{eq-Mink} and Lemma \ref{lemma-Vprime-estimate}, we have 
	\begin{equation}\label{eq1}
	\V'(h)>c_n\alpha^{-1}\left[\frac{1}{2}\left(\frac{\V(h)}{\omega_{n-1}}\right)^{\frac{n-2}{n-1}}-\left(1+\alpha^{-2}\right)\m_{\BY}\right]^{\frac{3}{2}}.
	\end{equation}
	
	For $\V(h)>\omega_{n-1}(2\m_{\BY})^{\frac{n-1}{n-2}}$, the right hand side of above inequality (as a function of $\alpha$) has a global maximum on the interval $(0,\infty)$ at
	\begin{equation}
	\alpha=\sqrt{3}\left[\frac{1}{2\m_{\BY}}\left(\frac{\V(h)}{\omega_{n-1}}\right)^{\frac{n-2}{n-1}}-1\right]^{-\frac{1}{2}}.
	\end{equation}Substituting this in \eqref{eq1}, we get the result.
	
	\end{proof}

In what follows, we will need to employ the following ODE comparison lemma.

\begin{lemma}\label{lemma-ODEcomp}\label{lemODEcomp}[Lemma 3.9 of \cite{HuangLee2015}] 
Let $V:[a,b]\to\R$ be a non-decreasing function satisfying $V'\geq F(V)$ almost everywhere on $[a,b]$ for some non-decreasing $F\in C^1$. Suppose further that there exists $Y\in C^2$ satisfying 
\bee 
	Y'=F(Y)\qquad \text{and} \qquad Y(a)\leq V(a).
\eee 
Then $Y\leq V$ on [a,b].
\end{lemma}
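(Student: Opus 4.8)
\textbf{Proof proposal for Lemma \ref{lemma-ODEcomp} (ODE comparison).} The plan is to run a standard ODE comparison/continuity argument adapted to the fact that $V$ is only assumed non-decreasing (hence differentiable almost everywhere, and absolutely continuous if we additionally know $V$ is, say, locally Lipschitz or $W^{1,1}$ --- which is the setting it is used in) while $Y$ is a genuine $C^2$ solution of the autonomous ODE $Y'=F(Y)$. First I would set $W\defeq V-Y$ and aim to show $W\geq 0$ on $[a,b]$, knowing $W(a)\geq 0$. The naive approach ``$W'=V'-Y'\geq F(V)-F(Y)\geq 0$ wherever $W\geq 0$, since $F$ is non-decreasing'' almost works, but one has to be careful because $V$ need not be differentiable everywhere and the inequality $V'\geq F(V)$ holds only a.e.

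The clean way is the following. Suppose, for contradiction, that $W(t_1)<0$ for some $t_1\in(a,b]$. Since $W$ is continuous (both $V$, being monotone and --- in the applications --- absolutely continuous, and $Y$ are continuous) and $W(a)\geq 0$, there is a largest $t_0\in[a,t_1)$ with $W(t_0)=0$, so $W<0$ on $(t_0,t_1]$, i.e.\ $V<Y$ on $(t_0,t_1]$. On this interval, at every point $t$ where $V$ is differentiable (a.e.\ $t$), using that $F$ is non-decreasing and $V(t)<Y(t)$,
\be
V'(t)\;\geq\; F(V(t))\;\leq\; F(Y(t))\;=\;Y'(t),
\ee
wait --- this chain is the wrong direction, so instead argue: $V'(t)\geq F(V(t))$ and we want to compare with $Y'(t)=F(Y(t))\geq F(V(t))$, which does \emph{not} immediately give $V'\geq Y'$. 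The correct device is a Gronwall-type estimate: since $F\in C^1$, it is locally Lipschitz, so on the compact range of $(V,Y)$ over $[t_0,t_1]$ there is a constant $L$ with $|F(p)-F(q)|\leq L|p-q|$. Then for a.e.\ $t\in(t_0,t_1)$,
\be
W'(t)=V'(t)-Y'(t)\;\geq\;F(V(t))-F(Y(t))\;\geq\;-L\,|W(t)|\;=\;L\,W(t),
\ee
the last equality because $W<0$ there. Since $W$ is absolutely continuous on $[t_0,t_1]$ with $W(t_0)=0$, integrating the differential inequality $W'\geq LW$ (equivalently $\frac{d}{dt}(e^{-Lt}W)\geq 0$) gives $e^{-Lt}W(t)\geq e^{-Lt_0}W(t_0)=0$ for all $t\in[t_0,t_1]$, hence $W(t_1)\geq 0$, contradicting $W(t_1)<0$. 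Therefore $W\geq 0$, i.e.\ $Y\leq V$, on $[a,b]$.

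\textbf{Main obstacle.} The only genuine subtlety is regularity: to integrate the pointwise a.e.\ differential inequality $W'\geq LW$ up to the conclusion $e^{-Lt}W(t)\geq 0$, one needs $W$ (equivalently $V$) to be absolutely continuous, not merely monotone --- a monotone function can decrease on a measure-zero Cantor set while having a.e.\ derivative satisfying any inequality one likes. In the applications above ($V=\V$, which by the coarea formula satisfies $\V'(h)=\int_{\S_h}|Df|^{-1}\FH_{\S_h}\,dV_h$ and is locally absolutely continuous on the relevant range of regular values), this holds, so I would either state the absolute-continuity hypothesis explicitly or invoke that $\V$ is locally Lipschitz on intervals of regular values. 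Everything else --- the choice of $t_0$ as the last zero of $W$, the local Lipschitz bound on $F$ from $F\in C^1$, and the one-line Gronwall integration --- is routine. (This is exactly Lemma 3.9 of \cite{HuangLee2015}, so one may alternatively simply cite it; I include the argument for completeness.)
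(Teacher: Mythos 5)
First, a point of comparison: the paper does not actually prove this lemma --- it is quoted verbatim from, and attributed to, Lemma 3.9 of \cite{HuangLee2015} --- so your argument can only be judged on its own terms. Your overall strategy (contradiction at the last point where $W\defeq V-Y$ vanishes, a local Lipschitz constant $L$ for $F$ from $F\in C^1$, and a Gr\"onwall integration of $W'\geq LW$) is a correct standard route, and all the sign manipulations are right. The one genuine issue is the regularity point you flag, but both your diagnosis and your proposed repair are off. A non-decreasing function cannot ``decrease on a measure-zero Cantor set''; the Cantor-type pathology is that a monotone function can \emph{increase} on a null set invisibly to its a.e.\ derivative, and for this lemma that pathology works in your favour. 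For any non-decreasing $V$ one has the one-sided fundamental theorem of calculus $V(t)-V(s)\geq\int_s^t V'(\tau)\,d\tau$, because the singular part of the Lebesgue--Stieltjes measure $dV$ is non-negative. The same inequality survives multiplication by the positive $C^1$ weight $e^{-L\tau}$ (Stieltjes integration by parts, or directly: $dV\geq V'\,d\tau$ as measures), so $e^{-Lt_1}W(t_1)-e^{-Lt_0}W(t_0)\geq\int_{t_0}^{t_1}\tfrac{d}{d\tau}\bigl(e^{-L\tau}W(\tau)\bigr)\,d\tau\geq0$ holds with no absolute continuity whatsoever; the $C^2$ term $Y$ contributes with equality and the monotone term with the correct inequality. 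Monotonicity likewise rescues your choice of $t_0$ even though $V$ (hence $W$) need not be continuous: any jumps of $W$ are upward, so $t_0\defeq\sup\{t\leq t_1: W(t)\geq 0\}$ satisfies $t_0<t_1$, $W(t_0)\geq0$ and $W<0$ on $(t_0,t_1]$, which is all the argument needs.

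This matters because your fallback --- ``in the applications $V=\V$ is locally absolutely continuous'' --- is not available here: Lemma \ref{lemma-Vincr} establishes only that $\V$ is non-decreasing and lower semicontinuous, and $\V$ can genuinely jump at critical values of $f$, so adding an absolute-continuity hypothesis would break the application in Lemma \ref{lemma-supf-est}. With the monotone one-sided FTC inequality inserted in place of that hypothesis, your proof closes and is essentially the standard comparison argument; cosmetically, you should also delete the inequality chain you wrote and then retracted mid-proof.
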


\begin{lemma} \label{lemma-supf-est}
Let $f:\ovr{U} \setminus U_o \To \R$ be a compact graphical manifold with non-negative scalar curvature, such that almost every level set is strictly mean convex and outer-minimizing. If $\m_{\BY}>0$, then for $n=3$ we have
\be \label{eq-est-maxf}
0 \leq \max(f) - h_o \leq  C_3|\pr U|^{\frac{1}{4}} \m_{\BY}^{\frac{1}{2}},
\ee 
and for $n \geq 4$, we have
\be \label{eq-est-maxf-n}
0 \leq \max(f) - h_o \leq  C_n\m_{\BY}^{\frac{1}{n-2}}\left(|\log\m_{\BY}| + |\pr U|\right),
\ee 
where $C_n$ is a dimensional constant.
\end{lemma}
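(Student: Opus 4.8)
The plan is to integrate the differential inequality from Lemma~\ref{lemma-Vprime-estimate} upward from $h_o$ to $\max(f)$, using the ODE comparison Lemma~\ref{lemma-ODEcomp} to bound the ``time'' it takes for $\V$ to travel from its value near $h_o$ up to at most $|\pr U|$. First I would record what happens at $h_o$: by the definition of $h_o$ and lower semicontinuity / monotonicity of $\V$ (Lemma~\ref{lemma-Vincr}), for every regular $h>h_o$ we have $\V(h)>2(1+\xi)^{\frac{n-1}{n-2}}\w_{n-1}(2\m_{\BY})^{\frac{n-1}{n-2}}$, which in particular exceeds $\w_{n-1}(2\m_{\BY})^{\frac{n-1}{n-2}}$, so Lemma~\ref{lemma-Vprime-estimate} applies on $(h_o,\max(f)]$ and gives
\[
\V'(h) > c_n\frac{2\m_{\BY}}{3\sqrt3}\left[\frac{1}{2\m_{\BY}}\left(\frac{\V(h)}{\w_{n-1}}\right)^{\frac{n-2}{n-1}}-1\right]^{\frac32}.
\]
The right-hand side has the form $F(\V(h))$ with $F$ non-decreasing and $C^1$ on the relevant range, so Lemma~\ref{lemma-ODEcomp} lets me compare $\V$ with the solution $Y$ of $Y'=F(Y)$ with $Y(h_o^+)$ equal to (a value just above) the threshold in Definition~\ref{def-h0}. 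Since $\V\leq|\pr U|$ everywhere, the length of the interval $[h_o,\max(f)]$ is at most the time $T$ for the model ODE to carry $Y$ from the threshold value to $|\pr U|$; hence $\max(f)-h_o\leq T$, and the whole problem reduces to estimating $T=\int F(y)^{-1}\,dy$ over $y\in[y_0,|\pr U|]$.

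Next I would carry out that integral estimate, which is where the dimensional split $n=3$ versus $n\geq4$ and the two different bounds \eqref{eq-est-maxf}, \eqref{eq-est-maxf-n} come from. Substituting $y=\w_{n-1}(2\m_{\BY})^{\frac{n-1}{n-2}}t^{\frac{n-1}{n-2}}$ (or a similar normalization) turns $T$ into $\m_{\BY}^{\frac{1}{n-2}}$ times a dimensionless integral of the form $\int (t-1)^{-3/2} t^{\beta}\,dt$ with $\beta=\beta(n)$, over $t$ ranging from a constant $\geq 2(1+\xi)^{\frac{n-1}{n-2}}$ (bounded away from $1$, which kills the would-be singularity at $t=1$) up to a quantity comparable to $|\pr U|\m_{\BY}^{-1}$. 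For large $t$ the integrand behaves like $t^{\beta-3/2}$; when $n=3$ one checks $\beta-3/2<-1$ so the tail converges and $T\lesssim \m_{\BY}^{1/(n-2)}\cdot(\text{const})$, but one must track the constant's dependence on $|\pr U|$ through the lower endpoint normalization, producing the $|\pr U|^{1/4}\m_{\BY}^{1/2}$ bound after also using that the starting volume is at least comparable to $\m_{\BY}^{(n-1)/(n-2)}$; when $n\geq4$ the exponent lands at the borderline giving a logarithm, $\int^{R} t^{-1}\,dt\sim\log R\sim|\log\m_{\BY}|+\log|\pr U|$, and the remaining polynomial-in-$|\pr U|$ contributions are absorbed into the $+|\pr U|$ term, yielding \eqref{eq-est-maxf-n}. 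This is essentially the higher-dimensional analogue of Lemma~4.6 of \cite{CKP} alluded to in the introduction, so I would organize the computation to mirror the $n=3$ case of \cite{HuangLee2015} and \cite{CKP} and then redo the endpoint analysis for general $n$.

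The main obstacle is the bookkeeping in the model-ODE integral: getting the \emph{sharp} power of $\m_{\BY}$ and the correct, clean dependence on $|\pr U|$ (a fourth root when $n=3$, additive $|\pr U|$ and $|\log\m_{\BY}|$ when $n\geq4$) rather than some cruder bound. Concretely, one must (i) justify applying Lemma~\ref{lemma-ODEcomp} with the correct initial value pulled just off $h_o$ and take a limit, handling the fact that $\V$ is only lower semicontinuous and that $h_o$ itself need not be a regular value; (ii) verify $F\in C^1$ and monotone on $[y_0,|\pr U|]$ so the comparison hypothesis is genuinely met; and (iii) perform the change of variables so that the singular factor $(t-1)^{-3/2}$ is evaluated only for $t$ bounded away from $1$ (this is exactly why the constant $\xi\geq1$ and the factor $2$ were built into Definition~\ref{def-h0}), while extracting the large-$t$ asymptotics uniformly in $n$. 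Once the substitution is set up correctly, the rest is a routine but careful elementary integral estimate; I do not expect any conceptual difficulty beyond that.
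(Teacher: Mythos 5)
Your overall strategy is exactly the paper's: apply the ODE comparison Lemma \ref{lemma-ODEcomp} to the differential inequality of Lemma \ref{lemma-Vprime-estimate}, reduce $\max(f)-h_o$ to the travel time of the model ODE from the threshold value of Definition \ref{def-h0} up to $|\pr U|$, and evaluate that time as $\m_{\BY}^{1/(n-2)}$ times the separable integral $\int p^{-3/2}(p+1)^{1/(n-2)}\,dp$, where $p=\frac{1}{2\m_{\BY}}(Y/\w_{n-1})^{(n-2)/(n-1)}-1$ runs from $\xi$ up to $p_{\max}\lesssim \m_{\BY}^{-1}(|\pr U|/\w_{n-1})^{(n-2)/(n-1)}$. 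Your points (i)--(iii) about the initial value, the $C^1$ monotone hypothesis, and the role of $\xi\geq 1$ in keeping $p$ bounded away from the singularity all match what the paper does.

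However, your asymptotic analysis of that integral is backwards, and the step ``when $n=3$ one checks $\beta-3/2<-1$ so the tail converges'' is false. With your normalization $\beta=\tfrac{1}{n-2}$, so for $n=3$ the integrand behaves like $t^{-1/2}$ for large $t$: the tail \emph{diverges}, with antiderivative $\sim\sqrt{p}$. Evaluating at the upper endpoint $p_{\max}\lesssim\m_{\BY}^{-1}|\pr U|^{1/2}$ gives $\sqrt{p_{\max}}\lesssim\m_{\BY}^{-1/2}|\pr U|^{1/4}$, and multiplying by the prefactor $\m_{\BY}^{1/(n-2)}=\m_{\BY}$ produces the claimed bound $C_3\,\m_{\BY}^{1/2}|\pr U|^{1/4}$. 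In other words, the $|\pr U|^{1/4}$ comes from the divergent tail at the \emph{upper} endpoint (via $Y\leq\V\leq|\pr U|$), not, as you write, from ``the lower endpoint normalization''; if the tail really converged you would get the much stronger bound $C\m_{\BY}$ with no $|\pr U|$-dependence at all, which contradicts the very estimate you are trying to prove. The situation for $n\geq 4$ is likewise the reverse of what you describe: there $\tfrac{1}{n-2}\leq\tfrac12$, so $p^{-3/2}(p+1)^{1/(n-2)}\leq p^{-3/2}(p+1)^{1/2}\sim p^{-1}$, and it is this case (borderline for $n=4$, convergent for $n\geq5$, uniformly controlled by the borderline bound) that yields $\log(1+p_{\max})\lesssim|\log\m_{\BY}|+|\pr U|$. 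The computation is easily repaired --- the paper integrates $p^{-3/2}(p+1)$ to $\sqrt p-1/\sqrt p$ for $n=3$ and bounds $(p+1)^{1/(n-2)}$ by $(p+1)^{1/2}$ for $n\geq4$ --- but as written your case analysis would not produce the stated estimates.
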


\begin{proof}

Assume $h_o< \max(f)$ for if $h_o=\max(f)$ there is nothing to prove.
 We would like to use Lemma \ref{lemma-Vprime-estimate} to prove this via Lemma \ref{lemODEcomp}, so we are led to consider the differential equation
\begin{align}
\begin{split}\label{eq-ODE-Y2}
Y'(h)&=C_n\m_{\BY}\( \frac{1}{2\m_{\BY}}\(\frac{Y(h)}{\omega_{n-1}} \)^{\frac{n-2}{n-1}}-1 \)^{3/2}\\\\
Y(h_o)&=\omega_{n-1}(1+\xi)^{\frac{(n-1)}{(n-2)}}(2\m_{\BY})^{\frac{n-1}{n-2}},
\end{split}
\end{align} 
where $\xi\geq1$ is defined as in Definition \ref{def-h0}. Note that this equation has a unique smooth solution $Y$ because the right hand side is smooth for given initial data set, which we can estimate as follows. Note we have for any regular value $h\geq h_o$ of $f$, we have $\V(h)\geq 2(1+\xi)^2 \w_{n-1}(2\m_{\BY})^{\frac{n-1}{n-2}}>(1+\xi)^2 \w_{n-1}(2\m_{\BY})^{\frac{n-1}{n-2}}$. In particular, we have from Lemma \ref{lemma-Vincr} that $\V$ is non-decreasing and indeed we can apply Lemma \ref{lemma-ODEcomp} to conclude $Y(h)\leq \V(h)$ for all $h\geq h_o$.

In order to proceed, we make the substitution $p(h)=\frac{1}{2\m_{\BY}}\(\frac{Y(h)}{\omega_{n-1}} \)^{\frac{n-2}{n-1}}-1$, so we have
\bee
Y'=2\m_{\BY}p'\frac{n-1}{n-2}\omega_{n-1}\left[2\m_{\BY}(p+1)\right]^{\frac{1}{n-2}}.
\eee
This then allows us to write \eqref{eq-ODE-Y2} as a separable ODE in $p$ and $Y$ resulting in
\be \label{eq-integralestimate}
h-h_o=\frac{2}{C_n}\frac{n-1}{n-2}\omega_{n-1} (2\m_{\BY})^{\frac{1}{n-2}}\int_{h_o}^{h}p^{-\frac{3}{2}}(p+1)^{\frac{1}{n-2}}\,dp
\ee 
For the sake of exposition, set $D_n=2^{\frac{n-1}{n-2}}\frac{1}{C_n}\frac{n-1}{n-2}\omega_{n-1}$. Let $n=3$, we now estimate \eqref{eq-integralestimate} by
\bee
h-h_o\leq D_3\m_{\BY}\int_{h_o}^{h}p^{-\frac{3}{2}}(p+1)\,dp,
\eee
By integration we have

\bee 
h-h_o\leq D_3\m_{\BY}\left[\sqrt{p}-1/\sqrt{p} \right]_{h_o}^{h}.
\eee 
Noting that $p(h_o)=\xi\geq1$ we have

\bee 
h-h_o\leq D_3\m_{\BY}\sqrt{p(h)}-1/\sqrt{p(h)}\leq \widetilde C_3\sqrt{\m_{\BY}}\left[Y(h)\right]^{\frac{1}{4}}.
\eee 

Then estimate \eqref{eq-est-maxf}  follows from this and $Y(h)\leq \V(h)$ for all $h\geq h_o$. Now let $n\geq 4$, using \eqref{eq-integralestimate}, we obtain
\be \label{eq-est-4}
h-h_o\leq D_n\m_{\BY}^{\frac{1}{n-2}}\int_{h_o}^{h}p^{-3/2}(p+1)^{1/2}\,dp= D_n\m_{\BY}^{\frac{1}{n-2}}\left(k(h)-k(h_o)\right)
\ee
where
\begin{equation*}
k(h):=-2\frac{\sqrt{p(h)+1}}{\sqrt{p(h)}}+\log\left(\frac{1}{2}+p(h)+\sqrt{p(h)^2+p(h)}\right).
\end{equation*}
Since $p(h_o)=\xi\geq 1$, we have 
\begin{equation}\label{k1}
-2\frac{\sqrt{p(h)+1}}{\sqrt{p(h)}}+2\frac{\sqrt{p(h_o)+1}}{\sqrt{p(h_o)}}\leq 2\frac{\sqrt{\xi+1}}{\sqrt{\xi}}\leq 2\sqrt{2}.
\end{equation}Moreover, for all $h\geq h_o$ we have the following estimate
\begin{equation}\label{k2}
\begin{split}
\log\left(\frac{3}{2}+\sqrt{2}\right)
&\leq\log\left(\frac{1}{2}+p(h_o)+\sqrt{p(h_o)^2+p(h_o)}\right)\\
&\leq\log\left(\frac{1}{2}+p(h)+\sqrt{p(h)^2+p(h)}\right)\\
&\leq \log(2)+\log\left(p(h)+1\right).
\end{split}
\end{equation}Combining equations \eqref{k1} and \eqref{k2}, we find the following estimate
\begin{equation}\label{eq22}
\begin{split}
k(h)-k(h_o)
&\leq  2\sqrt{2}+\log(2)-\log\left(\frac{3}{2}+\sqrt{2}\right)+\log\left(p(h)+1\right)\\
& \leq 6\log\left(1+p(h)\right).
\end{split}
\end{equation}
Together with \eqref{eq-est-4}, we have 
\begin{equation*}
\begin{split}
h-h_o&\leq \widetilde C_n\m_{\BY}^{\frac{1}{n-2}}\log\left(1+p(h)\right)\\
&=\widetilde C_n\m_{\BY}^{\frac{1}{n-2}}\log\left[\m_{\BY}^{-\frac{n-1}{n-2}}\omega_{n-1}^{-1}Y(h)\right]\\
\end{split}
\end{equation*} The estimate \eqref{eq-est-maxf-n} follows from above inequality and $Y(h)\leq \V(h)$ for all $h\geq h_o$.

\end{proof}

In order to establish the convergence of volumes in Theorem \ref{thm-main} we need estimates for the volume of a compact graph controlled by its Brown--York mass. The following lemma gives the desired estimate and it follows in a very similar way to \cite{CKP}.

\begin{lemma}  \label{lemma-vol-est}
Let $f:\ovr{U} \setminus U_o \To \R$ be a compact graphical manifold with non-negative scalar curvature, such that almost every level set is strictly mean convex and outer-minimizing. The following volume estimates hold. For $n=3$,
\bee
\vol(U) \leq \vol(\graph[f]) \leq \vol(U)+ \widetilde{C}_3(2 \m_{\BY})^3 + \widetilde{C}_3(h_o - \min(f))(2 \m_{\BY})^2 + \widetilde{C}_3|\pr U|^{\frac{5}{4}} \m_{\BY}^{\frac{1}{2}},
\eee
and for $n \geq 4$, we have

\bee
\begin{split}
\vol(U) \leq \vol(\graph[f]) &\leq \vol(U)+ \widetilde C_n(2 \m_{\BY})^{\frac{n}{n-2}}  \\ &\qquad + \widetilde{C}_n(h_o - \min(f))(2 \m_{\BY})^{\frac{n-1}{n-2}} + \widetilde{C}_n\m_{\BY}^{\frac{1}{n-2}}\left(|\log\m_{\BY}| +|\pr U| \right),
\end{split}
\eee
where $\vol(\cdot)$ denotes the $n$-dimensional volume and $|\cdot|$ the $(n-1)$-dimensional volume, and $\widetilde{C}_i$ $i=3,\hdots,n$ is a dimensional constant.
\end{lemma}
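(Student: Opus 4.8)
The plan is to begin from the area formula $\vol(\graph[f])=\int_{\ovr U\setminus U_o}\sqrt{1+|Df|^2}\,dx$, from which the lower bound $\vol(U)\leq\vol(\graph[f])$ is immediate since $\sqrt{1+|Df|^2}\geq 1$. The content is the upper bound, i.e.\ controlling the excess $\int_{\ovr U\setminus U_o}\bigl(\sqrt{1+|Df|^2}-1\bigr)\,dx$ in terms of $\m_{\BY}$. Following \cite{HuangLee2015} and \cite{CKP}, I would split the base at the critical height $h_o$ of Definition \ref{def-h0}, decomposing $\ovr U\setminus U_o=(\Omega_{h_o}\setminus U_o)\cup\{f\geq h_o\}$ and estimating the two pieces by different means: above $h_o$ the graph is vertically thin by Lemma \ref{lemma-supf-est}, while below $h_o$ the base set $\Omega_{h_o}$ has small measure because its boundary $\S_{h_o}$ has small area.

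For the piece over $\{f\geq h_o\}$, I would use the elementary inequality $\sqrt{1+t^2}-1\leq t$, so that after the coarea formula with respect to $f$ and the monotonicity $\V(h)\leq|\p U|$ of Lemma \ref{lemma-Vincr},
\begin{equation*}
\int_{\{f\geq h_o\}}\sqrt{1+|Df|^2}\,dx\leq \vol(U)+\int_{h_o}^{\max(f)}\V(h)\,dh\leq \vol(U)+|\p U|\,\bigl(\max(f)-h_o\bigr).
\end{equation*}
Inserting the bound for $\max(f)-h_o$ from Lemma \ref{lemma-supf-est} then produces the last term of each of the two claimed inequalities; this is the only place where the $n=3$ and $n\geq 4$ cases diverge, and the divergence is inherited entirely from Lemma \ref{lemma-supf-est}.

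For the piece over $\Omega_{h_o}\setminus U_o$, I would again split off the flat part, bounding the excess by $\int_{\min(f)}^{h_o}\V(h)\,dh\leq(h_o-\min(f))\,\V(h_o)$ exactly as above, and bound the flat part $\vol(\Omega_{h_o})$ via the Euclidean isoperimetric inequality, $\vol(\Omega_{h_o})\leq C_n|\p^*\Omega_{h_o}|^{\frac{n}{n-1}}\leq C_n\V(h_o)^{\frac{n}{n-1}}$. Since $\V$ is non-decreasing and lower semicontinuous, the definition of $h_o$ gives $\V(h_o)\leq 2(1+\xi)^{\frac{n-1}{n-2}}\w_{n-1}(2\m_{\BY})^{\frac{n-1}{n-2}}$, and substituting this into both estimates yields the remaining two terms, of orders $(2\m_{\BY})^{\frac{n}{n-2}}$ and $(h_o-\min(f))(2\m_{\BY})^{\frac{n-1}{n-2}}$. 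Adding the two pieces and absorbing constants gives the stated upper bounds.

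I expect the only real subtlety to be the measure-theoretic bookkeeping at $h_o$ and at the minimal boundary: $h_o$ need not be a regular value, so to apply the coarea formula and the isoperimetric inequality I would work with regular values $h\uparrow h_o$, using Lemma \ref{lemma-Vincr} to pass to the limit and to justify $\V(h_o)\leq\lim_{h\uparrow h_o}\V(h)$ together with the bound from Definition \ref{def-h0}. In the minimal-boundary case one must also check that $U_o\subset\Omega_{h_o}$, i.e.\ that the horizon lies strictly below $h_o$; this follows from the quasi-local Penrose inequality of Theorem \ref{thm-QLpenrosegraphs}, which forces $|\S_o|<\w_{n-1}(2\m_{\BY})^{\frac{n-1}{n-2}}$, well below the threshold in Definition \ref{def-h0}, so that level sets just above the horizon already satisfy the defining inequality for $h_o$. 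Beyond this bookkeeping, the argument uses only the inequality $\sqrt{1+t^2}-1\leq t$, the coarea formula, and the inputs of Lemmas \ref{lemma-Vincr} and \ref{lemma-supf-est}.
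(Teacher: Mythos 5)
Your proposal is correct and follows essentially the same route as the paper: the same decomposition of the graph over $\Omega_{h_o}$ and over $\{f\geq h_o\}$, the bound $\sqrt{1+|Df|^2}\leq 1+|Df|$ combined with the coarea formula, the isoperimetric inequality together with the defining bound on $\V$ at $h_o$ for the lower piece, and Lemmas \ref{lemma-Vincr} and \ref{lemma-supf-est} for the upper piece, with the same limiting argument over regular values $h\uparrow h_o$ and $h\downarrow h_o$. The only additions are your (correct, and welcome) remarks on the measure-theoretic bookkeeping at $h_o$ and at the minimal boundary, which the paper handles implicitly.
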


\begin{proof}
To estimate the volume of $\graph[f]$ we divide it into two regions: the part of the graph lying over $\Omega_{h_o}$ and the part over $\ovr{U} \setminus \Omega_{h_o}$. For convenience we denote the part of the graph over a set $S$ as $\graph[f]\vert_{S}$, and define the following sets
\begin{align*}
V_f^-(h) &\defeq \vol(\graph[f]\vert_{\Omega_h}), \\
V_f^+(h) &\defeq \vol(\graph[f]\vert_{\ovr{U} \setminus \Omega_h}).
\end{align*}

Hence, we want to estimate $V_f^-(h_o)$ and $V_f^+(h_o)$. For this, it will be useful to note that for $h \in [\min(f),\max(f)]$, the coarea formula gives
\bee
\int_{\Omega_h} |Df|\,dV_{\delta} = \int_{\min(f)}^h \V(t) \, dt,
\eee
where $dV_{\delta}$ denotes the Euclidean volume form. In addition, the isoperimetric inequality states 
\bee
\vol(\Omega_h) \leq C_n^{-1}|\S_h|^{\frac{n}{n-1}},
\eee
where $C_n=n^{\frac{n}{n-1}}\vol(B_1(0))^{\frac{1}{n-1}}$.

Let $h^- \leq h_o$ be a regular value, then by the coarea formula, the isoperimetric inequality and the definition of $h_o$ we have the following. 

\begin{align*}
V_f^+(h^-) &= \int_{\Omega_{h^-}} \sqrt{1 + |Df|^2}\, dV_{\delta} \\
&\leq  \int_{\Omega_{h^-}}(1 + |Df|)\, dV_{\delta} \\
&= \vol(\Omega_{h^-}) + \int_{\min(f)}^{h^-} \V(t)\, dt \\
&\leq C_n\V(h)^{\frac{n}{n-1}}  + (1+\xi)^{\frac{n-1}{n-2}} \w_{n-1}(h^- - \min(f))(2 \m_{\BY})^{\frac{n-1}{n-2}}  \\
&\leq C_n(1+\xi)^{\frac{n}{n-2}} \w_{n-1}^{\frac{n}{n-1}}(2 \m_{\BY})^{\frac{n}{n-2}}+ (1+\xi)^{\frac{n-1}{n-2}} \w_{n-1}(h^- - \min(f))(2 \m_{\BY})^{\frac{n-1}{n-2}}  
\end{align*}
where $C_n$, defined above, is the dimensional constant arising from the isoperimetric inequality. By taking a sequence $\{ h_i^- \}$ of regular values such that $h_i^- \uparrow h_o$, we have
\be \label{eq-Vdown}
V_f^-(h_o) \leq C_n(1+\xi)^{\frac{n}{n-2}} \w_{n-1}^{\frac{n}{n-1}}(2 \m_{\BY})^{\frac{n}{n-2}}+ (1+\xi)^{\frac{n-1}{n-2}} \w_{n-1}(h_o - \min(f))(2 \m_{\BY})^{\frac{n-1}{n-2}}. 
\ee

Now consider a regular value $h^+ \geq h_o$. Using again the coarea formula, the fact that $\V$ is non-decreasing (Lemma \ref{lemma-Vincr}), we have
\begin{align*}
V_f^+(h^+) &= \int_{U \setminus \Omega_{h^+}}\sqrt{1 + |Df|^2} \, dV_{\delta} \\
&\leq \vol(U \setminus \Omega_{h^+}) + \int_{h^+}^{\max(f)} \V(t) \, dt  \\
&\leq \vol(U) + |\pr U|(\max(f) - h_o).
\end{align*}
Then, by taking a sequence of regular values $\{ h^+_i \}$ such that $h^+_i \downarrow h_o$, we have
\be \label{eq-Vup}
V_f^+(h_o) \leq  \vol(U) + C|\pr U|(\max(f) - h_o).
\ee

Combining \eqref{eq-Vdown} and \eqref{eq-Vup} together with Lemma \ref{lemma-supf-est} the result follows.
\end{proof}

\section{Proof of the main result}\label{Sflatconvergence}
In this section we obtain the main results of this work. We start by briefly reminding the reader the definition of the Federer--Fleming flat distance \cite{FF60} which was extended to complete metric spaces by Ambrosio--Kirchheim \cite{A-K}, and then we proceed to apply the results of Section \ref{section-vol-est}. 

Recall that a submanifold $N$ of $\R^n$ can be seen as a integral current of multiplicity one. In this case, its boundary $\pr N$ is also an integral current. Moreover, the mass of a submanifold of $\R^n$ (as integral current) is denoted by $\bM(\cdot)$, and is given by its volume (as a manifold). For simplicity, we state the definition of the flat distance in the type of metric spaces we are interested in, that is, $Z=(a,b) \times \ovr{U} $, where $-\infty  \leq a < b \leq \infty$ and $U$ is a bounded open set in $\R^n$. 

\begin{definition}
Let $T_1$ and $T_2$ be integral $k$-currents in $Z=(a,b) \times \ovr{U} $, and let $\bM_z$ denote denote the mass of a current in $Z$. The flat distance between $T_1$ and $T_2$ in $Z$ is defined as
\bee
d_F^Z(T_1,T_2)= \inf \{ \bM_Z(A) + \bM_ Z(B) \, : \, T_1 - T_2 = A + \pr B \},
\eee
where the infimum is taken over all integral $k$-currents $A$ and all integral $(k+1)$-currents $B$ in $Z= (a,b) \times \ovr{U} $, such that $T_1 - T_2 = A + \pr B$.
\end{definition}

To compute the flat distance between a compact graphical manifold over $U$ and $\{ h_o \} \times \ovr{U}$ in terms of $\m_{\BY}(\pr U)$, we follow the procedure in \cite{HuangLee2015}.

\begin{lemma} \label{lemma-flatd}
Let $f:\ovr{U} \setminus U_o \To \R$ be a compact graphical manifold with non-negative scalar curvature, such that almost every level set is strictly mean convex and outer-minimizing. Then we have for $n=3$
\bee
d_F^{\R \times \ovr{U}}(\graph[f],\{ h_o \} \times \ovr{U}) \leq \widetilde{C}_3\( (h_o - \inf(f))  \m_{\BY}^3 +  \m_{\BY}^3 +  |U|  |\pr U|^{\frac{1}{4}}\m_{\BY}^{\frac{1}{2}}\),
\eee
and for $n \geq 4$,
\bee
d_F^{\R \times \ovr{U}}(\graph[f],\{ h_o \} \times \ovr{U}) \leq \widetilde{C}_n\( (h_o - \inf(f))  \m_{\BY}^{\frac{n}{n-2}} +  \m_{\BY}^{\frac{n}{n-2}} +  |U|\m_{\BY}^{\frac{1}{n-2}}\left(|\log\m_{\BY}| + |\pr U|\right)  \),
\eee
where $\widetilde{C}_n$ is a constant that depends only in the dimension.
\end{lemma}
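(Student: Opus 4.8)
The plan is to carry over the explicit filling argument of Huang and Lee \cite{HuangLee2015} to the compact setting: I would exhibit an integral $n$-current $A$ and an integral $(n+1)$-current $B$ in $\R\times\ovr U$ with $\graph[f]-\{h_o\}\times\ovr U=A+\p B$, and then bound $\bM(A)+\bM(B)$. First, extend $f$ to $\ovr f:\ovr U\To\R$ constant on $\ovr{U_o}$; as integral currents $\graph[\ovr f]=\graph[f]+D_o$, where $D_o$ is the flat $n$-disk that $\S_o=f(\pr U_o)$ bounds inside its own hyperplane, the two pieces cancelling along $\S_o$. Take $B$ to be the region of $\R\times\ovr U$ lying between $\graph[\ovr f]$ and $\{h_o\}\times\ovr U$, signed according to whether $\ovr f\gtrless h_o$; its boundary is $\p B=\graph[\ovr f]-\{h_o\}\times\ovr U+W$, where $W$ is the vertical wall over $\pr U$ (the wall degenerates over $\S_{h_o}$, where $\ovr f=h_o$, and is absorbed into $D_o$ along $\pr U_o$). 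Thus $\graph[f]-\{h_o\}\times\ovr U=A+\p B$ with $A=-D_o-W$, and $d_F^{\R\times\ovr U}(\graph[f],\{h_o\}\times\ovr U)\leq\bM(D_o)+\bM(W)+\bM(B)$.

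It then remains to estimate the three masses. For $\bM(B)=\int_{\ovr U}|\ovr f-h_o|\,dV_{\delta}$ I would split the integral at the level $h_o$: over $\ovr U\setminus\Omega_{h_o}$ the integrand is at most $\max f-h_o$, contributing at most $|U|(\max f-h_o)$; over $\Omega_{h_o}$ it is at most $h_o-\inf f$, and since $\ovr{\Omega_{h_o}}\Subset U$ has reduced boundary of $(n-1)$-measure $\V(h_o)$, the isoperimetric inequality and Definition \ref{def-h0} give $\vol(\Omega_{h_o})\leq C_n\V(h_o)^{\frac{n}{n-1}}\leq C_n'\m_{\BY}^{\frac{n}{n-2}}$, so this part contributes at most $C_n'(h_o-\inf f)\m_{\BY}^{\frac{n}{n-2}}$. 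For the wall, $\bM(W)=|\pr U|(\max f-h_o)$; and $\bM(D_o)=\vol(U_o)\leq C_n|\S_o|^{\frac{n}{n-1}}$, which by the graphical quasi-local Penrose inequality (Theorem \ref{thm-QLpenrosegraphs}) is at most $C_n''\m_{\BY}^{\frac{n}{n-2}}$ (and $D_o$ is absent when $U_o=\emptyset$). Summing these, and inserting the bound on $\max f-h_o$ from Lemma \ref{lemma-supf-est} — which is $\lesssim|\pr U|^{1/4}\m_{\BY}^{1/2}$ for $n=3$ and $\lesssim\m_{\BY}^{\frac{1}{n-2}}(|\log\m_{\BY}|+|\pr U|)$ for $n\geq4$ — yields, after collecting like terms, the two claimed inequalities.

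Most of this is routine, drawing on the estimates already established in Section \ref{section-vol-est}; the step I expect to require the most care is the current-theoretic bookkeeping of the decomposition near the minimal boundary. Concretely, one must verify that $\graph[\ovr f]$ is a genuine integral current of finite mass even though $\ovr f$ fails to be Lipschitz on $\pr U_o$ — this is exactly where the hypothesis $|Df|\To\infty$ on $\pr U_o$ is used, ensuring that $\graph[f]$ limits onto $D_o$ along $\S_o$ and that the gluing produces no spurious boundary on $\pr U_o$ — and that $W$, $D_o$, and the two slices $\{h_o\}\times\Omega_{h_o}$ and $\{h_o\}\times(\ovr U\setminus\Omega_{h_o})$ carry compatible orientations so that the cancellations in $\p B$ are exact. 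Structurally the bound mirrors that of Lemma \ref{lemma-vol-est}; the differences are that the $(h_o-\inf f)$-term now carries the power $\m_{\BY}^{\frac{n}{n-2}}$, because it arises from the Euclidean \emph{volume} of $\Omega_{h_o}$ rather than its perimeter, and that $|U|$ enters because $B$ is an $(n+1)$-dimensional filling over all of $\ovr U$ rather than a graph over it.
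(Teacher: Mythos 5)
Your proposal is correct and follows essentially the same route as the paper: the same filling of the minimal boundary by the flat disk, the same regions $B^{\pm}$ between $\graph[\ovr{f}]$ and $\{h_o\}\times\ovr{U}$, and the same three estimates (isoperimetric inequality plus the definition of $h_o$ for the part below $h_o$, the graphical quasi-local Penrose inequality for the disk, and Lemma \ref{lemma-supf-est} for the part above $h_o$). The one substantive difference is that you include the vertical wall $W$ over $\pr U$ between heights $h_o$ and $\max(f)$, whereas the paper's decomposition reads $\graph[\ovr{f}]-\{h_o\}\times\ovr{U}=\pr B+A$ with $A$ only the disk over $U_o$; as written that identity cannot hold, since the left side has boundary $\pr U\times\{\max f\}-\pr U\times\{h_o\}$ while the right side does not, so your bookkeeping is the more careful one. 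The price is an extra term $\bM(W)=|\pr U|(\max f-h_o)$, which after Lemma \ref{lemma-supf-est} means the displayed bounds should carry $|U|+|\pr U|$ where the lemma writes $|U|$; this is immaterial for Theorem \ref{thm-main2}, where $U$ is fixed along the sequence. Your remaining points of care (that $\graph[\ovr{f}]$ is a genuine integral current because $|Df|\to\infty$ forces the graph to close up onto the disk along $\S_o$, and that $D_o$ vanishes when $U_o=\emptyset$) are exactly the right things to check and are consistent with the paper.
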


\begin{proof}
We will estimate the flat distance between $\graph[f]$ and $\{h_o\}\times \ovr{U}$ inside $\R \times \ovr{U}$. To do this, first let $\ovr{f}:\ovr{U} \To \R$ be defined by ``filling in'' $\ovr{U}_o$, that is, $\ovr{f}=f$ on $\ovr{U} \setminus U_o$ and $\ovr{f}=f\vert_{\pr U_o}$ on $U_o$.
 
We define the following regions (see Figure \ref{fig-flatd}, below):
\begin{align*}
B^+ &\defeq \{ (s,x) \in \R \times U \, | \, h_o \leq s \leq \ovr{f}(x)  \}, \\
B^- &\defeq \{ (s,x) \in \R \times U \, | \, \ovr{f}(x) \leq s \leq h_o  \}, \\
A &\defeq \{ (s,x) \in \R \times U \, | \, s=f(y)\textnormal{ for $y \in \pr U_o$}, x \in \ovr{U_o}  \}.
\end{align*}

Note that $B^+$ is the region above $\{ h_o \} \times \ovr{U}$ and below $\graph[\ovr{f}]$, $B^-$ is the region below $\{ h_o \} \times \ovr{U}$ and above $\graph[\ovr{f}]$, and $A$ is the ``filled-in'' boundary inside the hyperplane that contains them. Here we are assuming that $h_o \in (\min(f),\max(f))$ and $U_o \neq \emptyset$, but if this is not the case, the only difference is that some of the above sets might be empty. 

Taking $f$ with the upward orientation and $A$ with the downward orientation, we have that, as integral currents, $\graph[f] - A=\graph[\ovr{f}]$. Taking $\{h_o\}\times \ovr{U}$ and $B^+$ with the upward orientation, and $B^-$ with the downward orientation, we have $\graph[\ovr{f}] - \{h_o\}\times \ovr{U}=\pr B + A$, where $B=B^++B^-$. Hence to compute the flat distance between $\graph[f]$ and $\{h_o\}\times \ovr{U}$ inside $\R \times \ovr{U}$, we need to compute $\bM(B)=\bM(B^+) + \bM(B^-)$ (since $B^+$ and $B^-$ are disjoint) and $\bM(A)$. In what follows we will denote all constants by $C$, but we remark that although they may be different they only depend on the dimension.

For any regular value $h \leq h_o$, by definition of $h_o$ we have

\bee
 \V(h)\leq (1+\xi)^{\frac{n-1}{n-2}} \w_{n-1}(2 \m_{\BY})^{\frac{n-1}{n-2}} 
\eee
Then, by the isoperimetric inequality, we have $|\Omega_h| \leq C (1+\xi)^{\frac{n}{n-2}}(\w_{n-1})^{\frac{n}{n-1}} \m_{\BY}^{\frac{n}{n-2}}$. Then we can estimate $\bM(B^-)$ as follows.
\begin{align*}
\bM(B^-) &\leq \int_{\inf(f)}^{h_o} C (1+\xi)^{\frac{n}{n-2}}(\w_{n-1})^{\frac{n}{n-1}} \m_{\BY}^{\frac{n}{n-2}} \, dh \\
&\leq C (h_o - \inf(f)) \m_{\BY}^{\frac{n}{n-2}}.
\end{align*}

By the quasi-local Riemannian Penrose inequality for graphs (Theorem \ref{thm-QLpenrosegraphs}), we know that
\be 
\m_{\BY} > \frac12\( \frac{|\S_o|}{\omega_{n-1}} \)^{\frac{n-2}{n-1}}.
\ee 
Using again the isoperimetric inequality we have
\bee
\bM(A) \leq  C \m_{\BY}^{\frac{n}{n-2}}.
\eee

Recall that by Lemma \ref{lemma-supf-est}, we have $0 \leq \sup(f) - h_o \leq C |\pr U|^{\frac{1}{4}}\m_{\BY}^{\frac{1}{2}}$ for $n=3$. It follows readily that
\begin{align*}
\bM(B^+) &\leq |U| ( \sup(f) - h_o ) \\
&\leq C |U|  |\pr U|^{\frac{1}{4}}\m_{\BY}^{\frac{1}{2}}.
\end{align*}

Therefore, we obtain
\bee
d_F^{\R \times \ovr{U}}(\graph[f],\{ h_o \} \times \ovr{U}) \leq C\( (h_o - \inf(f))  \m_{\BY}^3 +  \m_{\BY}^3 +  |U|  |\pr U|^{\frac{1}{4}}\m_{\BY}^{\frac{1}{2}}\).
\eee

Finally, the case $n \geq 4$ follows in the same way using that
\bee
\bM(B^+) \leq C |U| \m_{\BY}^{\frac{1}{n-2}}\left(|\log\m_{\BY}| + |\pr U|\right).
\eee
\end{proof}

\begin{figure}[h!]

	\begin{tikzpicture}
    \node[anchor=south west,inner sep=0] at (0,0) {\includegraphics[scale=.9]{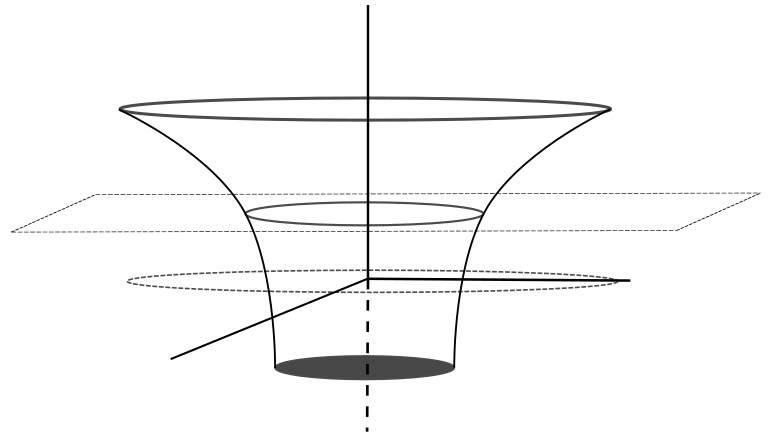}};
    \node at (13.7,3.7) {$\{x^{n+1}=h_o\}$};
    \node at (12.6,2.5) {$\R^n$};
    \node at (7.6,8.8) {$x^{n+1}$};
    \node at (12.8,6.5) {$\graph[f]$};
    \node at (8,0.8) {$A$};
    \node at (5.9,3.5) {$B^-$};
    \node at (3.5,5) {$B^+$};
    \node at (2.1,3.1) {$U$};
\end{tikzpicture}
	\caption{Depiction of the regions $B^+$, $B^-$, and $A$, described above.}
\label{fig-flatd}
\end{figure}

We now state Theorem \ref{thm-main} in a precise way. 

\begin{thm} \label{thm-main2}
Let $\{(\Omega_j,g_j)\}$ be a sequence of admissible compact graphs over $U$, that is $(\Omega_j,g_j) \in \A_U$, and let $f_j:\ovr{U}\setminus U_o^j \to \R$ be the function realizing $(\Omega_j,g_j)$ as a graph in $\R^{n+1}$ ($n \geq 3$). Normalize $f_j$ so that $h_o(f_j)=0$. Let $\m_{\BY}^{f_j}=\m_{\BY}^{f_j}(\pr U)$. We consider the following two cases:
\begin{enumerate}
\item suppose that $\inf(f_j)(\m_{\BY}^{f_j})^{\frac{n}{n-2}} \to 0$ as $\m_{\BY}^{f_j} \to 0$, or
\item that $(\Omega_j,g_j) \in \A_U^D$.
\end{enumerate}
If $\m_{\BY}(\Omega_j) \to 0$, then $\{(\Omega_j,g_j)\}$ converges to the region $\ovr{U}$ in $\R^3$ with respect to the Federer--Fleming flat distance. Moreover $\vol(\graph[f_j]) \to \vol(U)$ as $\m_{\BY}^{f_j} \to 0$.
\end{thm}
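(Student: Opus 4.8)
\textbf{Proof plan for Theorem \ref{thm-main2}.}

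The plan is to combine the flat-distance estimate of Lemma \ref{lemma-flatd} with the volume estimate of Lemma \ref{lemma-vol-est}, observing that under the normalization $h_o(f_j)=0$ all the manifolds $\graph[f_j]$ live inside a fixed slab (in Case (2) trivially, since $|\max(f_j)|\leq D$ together with Lemma \ref{lemma-supf-est} confines the graphs; in Case (1) one controls the downward extent by hypothesis and the upward extent by Lemma \ref{lemma-supf-est}). First I would write down Lemma \ref{lemma-flatd} with $h_o(f_j)=0$ substituted, so that the bound reads
\bee
d_F^{\R \times \ovr{U}}(\graph[f_j],\{0\}\times\ovr{U}) \leq \widetilde C_n\( |\inf(f_j)|(\m_{\BY}^{f_j})^{\frac{n}{n-2}} + (\m_{\BY}^{f_j})^{\frac{n}{n-2}} + |U|\,(\m_{\BY}^{f_j})^{\frac{1}{n-2}}\big(|\log\m_{\BY}^{f_j}| + |\pr U|\big) \)
\eee
for $n\geq 4$, and the analogous expression for $n=3$. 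Then I would argue that each term on the right tends to zero as $\m_{\BY}^{f_j}\to 0$: the middle and last terms go to zero because $|U|$ and $|\pr U|$ are fixed and $t^{1/(n-2)}|\log t|\to 0$ as $t\to 0^+$; the first term goes to zero by hypothesis in Case (1), while in Case (2) the boundedness $|\max(f_j)|\leq D$ forces $|\inf(f_j)| \leq |\max(f_j)| + |\max(f_j)-h_o(f_j)| + |h_o(f_j)| \leq D + C_n(\m_{\BY}^{f_j})^{1/(n-2)}(|\log\m_{\BY}^{f_j}|+|\pr U|)$ via Lemma \ref{lemma-supf-est}, which is bounded, so that term is $O((\m_{\BY}^{f_j})^{n/(n-2)})\to 0$. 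This gives $d_F^{\R\times\ovr U}(\graph[f_j],\{0\}\times\ovr U)\to 0$, which is the asserted flat convergence.

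For the volume statement, I would invoke Lemma \ref{lemma-vol-est} (again with $h_o(f_j)=0$): it gives $\vol(U)\leq \vol(\graph[f_j]) \leq \vol(U) + (\text{terms that are powers of }\m_{\BY}^{f_j}\text{ and }(h_o-\inf f_j)(\m_{\BY}^{f_j})^{(n-1)/(n-2)}\text{ and }(\m_{\BY}^{f_j})^{1/(n-2)}(|\log\m_{\BY}^{f_j}|+|\pr U|))$. Exactly as above, in Case (1) the term $|\inf(f_j)|(\m_{\BY}^{f_j})^{(n-1)/(n-2)}$ is dominated by the hypothesis $|\inf(f_j)|(\m_{\BY}^{f_j})^{n/(n-2)}\to 0$ after multiplying by $(\m_{\BY}^{f_j})^{-1/(n-2)}$ — wait, one must be slightly careful here, so I would instead note that in Case (1) the hypothesis directly controls the relevant product since $(n-1)/(n-2) > n/(n-2)$ is false; rather $(\m_{\BY}^{f_j})^{(n-1)/(n-2)} \leq (\m_{\BY}^{f_j})^{n/(n-2)}$ fails too, so the cleanest route is to observe $|\inf(f_j)|(\m_{\BY}^{f_j})^{(n-1)/(n-2)} = |\inf(f_j)|(\m_{\BY}^{f_j})^{n/(n-2)}\cdot (\m_{\BY}^{f_j})^{-1/(n-2)}$, which is \emph{not} obviously controlled — so in Case (1) one should strengthen the bookkeeping, or simply note that the natural hypothesis to state is on the product appearing in the volume estimate; in Case (2), $|\inf(f_j)|$ is bounded by $D + o(1)$ as shown above, so this term is $O((\m_{\BY}^{f_j})^{(n-1)/(n-2)})\to 0$. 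In all cases the upper bound converges to $\vol(U)$, and since the lower bound is exactly $\vol(U)$, the squeeze theorem yields $\vol(\graph[f_j])\to\vol(U)$.

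The main obstacle is the handling of the unbounded-depth direction, i.e. controlling the terms involving $\inf(f_j)$: Lemma \ref{lemma-supf-est} only estimates $\max(f_j)-h_o$ from above, saying nothing about how far $\inf(f_j)$ (the bottom of a gravity well / the height of an interior minimal boundary) can be from $h_o$. This is precisely why the two hypotheses in the theorem are needed — Case (2)'s uniform height bound $|\max(f_j)|\leq D$ converts into a uniform bound on $|\inf(f_j)|$ through Lemma \ref{lemma-supf-est}, while Case (1) directly postulates the decay of the offending product. Once that dichotomy is in place, the remainder is a routine limit computation term-by-term, using that the fixed geometric quantities $|U|$, $|\pr U|$, $\vol(U)$ do not change along the sequence and that $t^{1/(n-2)}|\log t|\to 0$.
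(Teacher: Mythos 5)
Your proposal follows essentially the same route as the paper: substitute $h_o(f_j)=0$ into Lemma \ref{lemma-flatd}, send each term to zero using the hypothesis of Case (1) or the uniform height bound of Case (2), and then repeat with Lemma \ref{lemma-vol-est} for the volumes. Two remarks. First, the difficulty you flag in Case (1) for the volume statement is genuine: Lemma \ref{lemma-vol-est} contains the term $(h_o-\min(f_j))(2\m_{\BY}^{f_j})^{\frac{n-1}{n-2}}$, and since $\frac{n-1}{n-2}<\frac{n}{n-2}$ this is \emph{not} controlled by the hypothesis $\inf(f_j)(\m_{\BY}^{f_j})^{\frac{n}{n-2}}\to 0$; the paper's own proof simply says the volume convergence ``follows in a similar way'' and does not address this, so in Case (1) the volume conclusion really does require either a strengthened hypothesis (decay of $\inf(f_j)(\m_{\BY}^{f_j})^{\frac{n-1}{n-2}}$) or an additional argument. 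You were right not to paper over it. Second, your Case (2) bound $|\inf(f_j)|\leq |\max(f_j)|+|\max(f_j)-h_o(f_j)|+|h_o(f_j)|$ is not a valid inequality (the depth of the graph below its maximum is not controlled by the maximum alone); the bound you actually need is that $h_o(f_j)-\inf(f_j)$ is uniformly bounded, which is translation-invariant and follows from the intended reading of $\A_U^D$ (the remark after Definition \ref{def-family} indicates the whole graph is meant to lie in $[-D,D]\times\ovr{U}$, whence $h_o(f_j)-\inf(f_j)\leq \max(f_j)-\inf(f_j)\leq 2D$), rather than from the literal condition $|\max(f_j)|\leq D$ combined with Lemma \ref{lemma-supf-est}. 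With that repair your argument matches the paper's.
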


\begin{remark}
The normalization so that $h_o(f_j)=0$ for all $j$ is achieved by applying vertical translations.
\end{remark}

\begin{proof}
Let $\{ f_i \}$ be the sequence given in the statement. Normalize this sequence by a vertical translation so that $h_o(f_i)=0$. By Lemma \ref{lemma-flatd}, we have for $n=3$ 

\bee
d_F^{\R \times \ovr{U}}(\graph[f_j],\{ 0\} \times \ovr{U}) \leq \widetilde{C}_3\( \inf(f_j) \m_{\BY}^3 +  \m_{\BY}^3 +  |U|  |\pr U|^{\frac{1}{4}}\m_{\BY}^{\frac{1}{2}}\),
\eee
and for $n \geq 4$,
\bee
d_F^{\R \times \ovr{U}}(\graph[f_j],\{ 0 \} \times \ovr{U}) \leq \widetilde{C}_n\( \inf(f_j)  \m_{\BY}^{\frac{n}{n-2}} +  \m_{\BY}^{\frac{n}{n-2}} +  |U|\m_{\BY}^{\frac{1}{n-2}}\left(|\log\m_{\BY}| + |\pr U|\right)  \).
\eee

Clearly, if $\inf(f_j)(\m_{\BY}^{f_j})^{\frac{n}{n-2}} \to 0$ as $\m_{\BY}^{f_j} \to 0$ or $(\Omega_j,g_j) \in \A_U^D$, the flat distance between $\graph[f_j]$ and $\{ 0\} \times \ovr{U}$ converges to 0 as $\m_{\BY}^{f_j} \to 0$ and the convergence with respect to the flat distance follows. The convergence of the volumes follows in a similar way by applying Lemma \ref{lemma-vol-est}.
\end{proof}

\end{document}